\newcommand{\thickhline}{%
	\noalign {\ifnum 0=`}\fi \hrule height 1pt
	\futurelet \reserved@a \@xhline
}
\newcolumntype{L}{>{\centering\arraybackslash}m{3cm}} 
\newcommand{\etal}{\mbox{\emph{et al.\ }}} 
\newcommand{\E}{\mathcal{E}}
\DeclareMathOperator{\diag}{diag}
\newlength\myindent
\newtheorem{theorem}{Theorem}[section]
\newtheorem{lemma}[theorem]{Lemma}
\newtheorem{proposition}[theorem]{Proposition}
\newtheorem{example}[theorem]{Example} 
\newtheorem{assumption}[theorem]{Assumption}
\newtheorem{remark}{Remark}
\journal{Mathematics and Computers in Simulation}
\begin{document}

\begin{frontmatter}

\title{Analysis and Efficient Sylvester-Based Implementation of a Dimension-Split ETD2RK Scheme for Multidimensional Reaction–Diffusion Equations}

\author[kfupm,crac]{Ibrahim O.~Sarumi \corref{cor1}}
\ead{ibrahim.sarumi@kfupm.edu.sa}

\address[kfupm]{Department of Mathematics, King Fahd University of Petroleum \& Minerals,
	Dhahran, Saudi Arabia}
\address[crac]{Center for Refining and Advanced Chemicals, King Fahd University of Petroleum \& Minerals, 
	Dhahran, Saudi Arabia}
\cortext[cor1]{Corresponding author}

\begin{abstract}
We propose and analyze a second-order, dimension-split exponential time differencing Runge--Kutta scheme (ETD2RK-DS) 
for multidimensional reaction--diffusion equations in two and three spatial dimensions. Under mild assumptions on the 
nonlinear source term, we establish uniform stability bounds and prove second-order temporal convergence for the 
underlying dimension-split scheme.

To enable efficient implementation, we employ Pad\'e approximations of the matrix exponential, converting each required 
matrix-exponential--vector product into the solution of a shifted linear system. A convergence analysis of the resulting 
Pad\'e-based ETD2RK-DS formulation is provided. We derive explicit and reproducible tensor-slicing and reshaping algorithms 
that realize the dimension-splitting strategy, decomposing multidimensional systems into collections of independent 
one-dimensional problems. This leads to a reduction of the dominant per-time-step computational cost from 
$\mathcal{O}(m^3)$ to $\mathcal{O}(m^2)$ in two dimensions and from $\mathcal{O}(m^5)$ to $\mathcal{O}(m^3)$ in three 
dimensions when compared with banded LU solvers for the unsplit problem, where $m$ denotes the number of grid points per 
spatial direction.

Furthermore, we develop a Sylvester-equation reformulation of the resulting one-dimensional systems, enabling a highly 
efficient spectral implementation based on reusable eigendecompositions, matrix--vector multiplications, and Hadamard 
divisions. Numerical experiments in two and three dimensions, including a coupled FitzHugh--Nagumo system, confirm the 
second-order temporal accuracy, stability of the underlying scheme, and scalability of the proposed ETD2RK-DS framework, 
as well as the substantial computational advantages of the Sylvester-based implementation over classical LU-based solvers.
\end{abstract}

%%%Graphical abstract
%\begin{graphicalabstract}
%%\includegraphics{grabs}
%\end{graphicalabstract}
%
%%%Research highlights
%\begin{highlights}
%\item Research highlight 1
%\item Research highlight 2
%\end{highlights}

%% Keywords
\begin{keyword} 
	Exponential Time Differencing \sep Dimension splitting \sep Sylvester equation \sep Reaction-Diffusion

\MSC 65M06 \sep 65L20 \sep 65M20 \sep 65F60 \sep 65L05
\end{keyword}

\end{frontmatter}

%% Add \usepackage{lineno} before \begin{document} and uncomment 
%% following line to enable line numbers
%% \linenumbers

%% main text
 
\section{Introduction}\label{sec:Intro}
This paper is concerned with the numerical solution of the multidimensional reaction--diffusion equation
\begin{equation}\label{eq:model}
	u_t(\mathbf{x},t)+\mathcal{A}u(\mathbf{x},t)=f(\mathbf{x},t,u),
	\qquad \mathbf{x}\in\Omega,\ 0~<~t~<~T,
\end{equation} 
posed on a rectangular domain $\Omega \subset \mathbb{R}^2$ or a rectangular box in $\mathbb{R}^3$, and subject to
homogeneous Dirichlet conditions $u(\mathbf{x},t)=0$ on $\partial\Omega$ and prescribed initial data
$u(\mathbf{x},0)=u_0(\mathbf{x})$.
The time invariant operator $\mathcal{A} := -\kappa \Delta  + q$, where the constants $\kappa > 0$ and 
$q \ge 0$, are the diffusivity and potential, respectively. 

Reaction--diffusion systems arise in a wide range of applications, including porous media \cite{Tartakovsky:2019}, 
chemical kinetics \cite{Ipsen:2000}, activator–-inhibitor models in pattern formation \cite{Maini:2004,Bailles:2022}.
For general nonlinear source terms, exact solutions are rarely available, making the development of stable and 
accurate numerical schemes essential.

Spatial discretization of \eqref{eq:model} leads to large-scale systems whose efficient solution presents significant 
computational challenges. In two dimensions, discretization with $m$ grid points per direction yields $\mathcal{O}(m^2)$ 
unknowns and $\mathcal{O}(m^3)$ complexity per solve when banded LU factorization is employed. In three dimensions, this 
worsens to $\mathcal{O}(m^3)$ unknowns and $\mathcal{O}(m^5)$ operations per solve, rendering high-resolution simulations 
prohibitively expensive. This unfavorable scaling has motivated the development of structure-exploiting algorithms that 
reduce computational complexity.

Matrix-equation-based strategies, such as those proposed by Palitta and Simoncini \cite{Palitta:2016} and extended in \cite{Palitta:2021}, 
reformulate multidimensional discretizations as Sylvester equations with coefficient matrices of size $\mathcal{O}(m)$. 
While effective for time-stepping schemes that admit matrix--matrix formulations, such approaches are less convenient for exponential 
integrators, which are naturally expressed in matrix--vector form.
In this context, dimension-splitting strategies \cite{Asante:2020} have been developed that exploit the underlying Kronecker 
structure of the discretized operator to decompose the multidimensional problem into sums of directional operators of the 
same algebraic dimension. This yields matrices 
with improved sparsity and band structure, enabling more efficient banded linear solves.

Reaction--diffusion models are often stiff, limiting the efficiency of explicit schemes. Fully implicit methods provide stability but 
require the solution of nonlinear systems \cite{Madzvamuse:2014}. Implicit--explicit (IMEX) schemes \cite{Ascher:1995,Lakkis:2013} and 
predictor--corrector methods \cite{Rempe:2006} partially alleviate stiffness by treating linear and nonlinear terms separately. 
Exponential time differencing (ETD) schemes address stiffness by integrating the linear part exactly and have proven effective for 
dissipative systems. Various exponential integrators have been proposed, including Lawson methods \cite{Caliari:2024}, multistep 
ETD schemes \cite{Beylkin:1998}, and exponential Rosenbrock methods \cite{Dallerit:2024}. In particular, Cox and Matthews \cite{Cox:2002} 
introduced exponential Runge--Kutta (ETDRK) methods, and their rigorous error analysis has been developed by Hochbruck and Ostermann 
\cite{Hochbruck:2005,Hochbruck:2005b}.

Efficient implementation of ETD schemes in multiple spatial dimensions remains an active area of research. 
Recently, Asante-Asamani \etal\ \cite{Asante:2025} proposed a fourth-order, Pad\'e-based, dimension-split ETDRK 
method and demonstrated strong empirical performance. Their implementation primarily exploits the improved sparsity 
and band structure of the split operators to achieve computational efficiency through the use of 
efficient banded linear solvers, while operating on systems of the original algebraic dimension. This approach 
does not explore the possibility of explicitly reorganizing the discrete system via tensor slicing into smaller 
independent subsystems. Related second-order schemes using rational approximations with real distinct poles have 
been studied in \cite{Asante:2016}, motivated in part by the desire to avoid the complex arithmetic associated 
with Pad\'e approximants.

To the best of our knowledge, rigorous stability and convergence analysis for Pad\'e-based, dimension-split ETDRK schemes, 
together with an explicit and implementation-ready treatment of the matrix slicing and reshaping required to reduce 
multidimensional systems to collections of independent one-dimensional problems, remain underdeveloped. 
Moreover, while several existing approaches either rely primarily on improved sparsity and band structure of split operators 
or avoid Pad\'e approximants altogether in order to bypass complex arithmetic, the efficient solution of the resulting shifted 
linear systems within a Pad\'e-based ETD framework, particularly in the presence of heterogeneous diffusion coefficients, remains 
comparatively unexplored.

In this work, we propose and analyze a second-order, dimension-split exponential time differencing Runge--Kutta scheme 
(ETD2RK--DS) for multidimensional reaction--diffusion equations. For the underlying ETD2RK--DS formulation based on exact 
matrix exponentials, we establish uniform stability and second-order temporal convergence under mild assumptions on the 
nonlinear source term. Pad\'e approximants are then employed for efficient implementation of the fully discrete scheme.  
For this formulation, we provide a rigorous convergence analysis that builds on the structure and stability of the underlying method. 
A second-order dimension-split ETD2RK scheme was previously developed and analyzed in \cite{Chen:2021}, where Krylov 
subspace methods were adopted to approximate the matrix exponential. While the present work shares the dimension-splitting 
framework, the Pad\'e-based formulation considered here leads to a distinct analytical treatment and algorithmic structure, 
and necessitates explicit handling of the shifted linear systems arising from the rational approximation.

From an algorithmic perspective, Pad\'e approximations reduce the action of the matrix exponential on a vector to the solution 
of shifted linear systems. Although dimension splitting provides a natural framework for decomposing a multidimensional 
operator into directional components, at the algebraic level the resulting split operators retain Kronecker structure and act 
on vectors of full multidimensional size. To realize a genuine reduction to independent one-dimensional solves, systematic 
tensor slicing and reshaping of the discrete solution and source terms are required. We derive explicit and reproducible 
matrix-slicing identities that achieve this reduction and lead to substantial complexity savings: from $\mathcal{O}(m^3)$ to 
$\mathcal{O}(m^2)$ per solve in two dimensions and from $\mathcal{O}(m^5)$ to $\mathcal{O}(m^3)$ in three dimensions. This asymptotic 
improvement enables simulations at resolutions that would otherwise be computationally infeasible. 

To efficiently solve the resulting one-dimensional shifted systems, we introduce a Sylvester-equation reformulation that 
enables a spectral implementation based on reusable eigendecompositions. This approach confines complex-valued operations 
associated with Pad\'e approximants to a one-time preprocessing step, see Remark~\ref{rem:algorithms}, and reduces subsequent 
linear solves to real-valued matrix--vector multiplications and Hadamard divisions, thereby mitigating the computational 
overhead typically associated with the complex arithmetic induced by Pad\'e approximants. As a result, the method achieves 
significant computational savings relative to LU-based solvers and remains effective for higher-degree Pad\'e approximants 
as well as systems with heterogeneous diffusivities.

The principal contributions of this work lie in the combination of a rigorous analysis of the Pad\'{e}-based, 
dimension-split exponential time differencing schemes with an explicit tensor-slicing and reshaping 
algorithm, together with a Sylvester-equation reformulation that fundamentally reduces the computational complexity 
of their practical implementation.

The rest of this paper is arranged as follows. Derivation of the fully discrete 
scheme is presented in Section \ref{sec:discretize}. The numerical aspects, 
including the slicing procedure and Sylvester‑equation‑based algorithm are 
presented in Section \ref{Sec:Numerical_Aspects}. Stability and convergence 
analysis are presented in Section \ref{sec:analysis}. In Section \ref{Sec:Numerics} 
we present numerical examples.

\section{Numerical method}\label{sec:discretize}
To construct the fully discrete scheme, we first discretize the model problem
\eqref{eq:model} in space, obtaining a finite-dimensional system of ordinary
differential equations suitable for the application of exponential time
differencing in time. 

\subsection{Spatial Discretization}\label{subsec:spatial_Disc} 
For simplicity of presentation we assume that $\Omega$ is a rectangle
$(x_a,x_b)\times(y_a,y_b)\subset\mathbb{R}^2$ or a cuboid
$(x_a,x_b)\times(y_a,y_b)\times(z_a,z_b)\subset\mathbb{R}^3$. 

We start by approximating the spatial derivatives by the second-order centered difference. In two space dimensions, consider mesh sizes 
\(h_x = (x_b - x_a)/m_x\), \(h_y = (y_b - y_a)/m_y\), and define the set of grid points  
$\Omega^{(2)}_h = \{(x_i,y_j): x_i = x_a + ih_x, i = 1, 2, \dots, m_x - 1, y_j = y_a + j h_y, j = 1, 2, \ldots, m_y-1\}$ of $\Omega \subset \mathbb{R}^2$. 
Similarly, for the three space dimensions, take \(h_z = (z_b - z_a)/m_z\) in the \(z\) direction and define mesh 
$\Omega^{(3)}_h = \{(x_i,y_j,z_k): x_i = x_a + ih_x, i = 1, 2, \dots, m_x - 1, \,\,\,  y_j = y_a + j h_y, j = 1, 2, \ldots, m_y-1, \,\,\,  z_k = z_a + k h_z, 
k = 1, 2, \ldots, m_z-1\}$ of $\Omega \subset \mathbb{R}^3$. 

For \((x_i, y_j) \in \Omega^{(2)}_h \), subject to the regularity assumptions required for the centered difference approximation, we can have  
\begin{equation*}
	u'_{ij}(t) - \kappa \Delta_{h_x}u_{ij}(t) - \kappa \Delta_{h_y}u_{ij}(t) + qu_{ij}(t) + \mathcal{O}(h_x^2) + \mathcal{O}(h_y^2) = f_{ij}(t,u_{ij}(t)).
\end{equation*}  
where 
$$
\Delta_{h_x}u_{ij}(t) = \frac{u(x_{i+1},y_j,t) -2u(x_i,y_j,t) + u(x_{i-1},y_j,t)}{h^2_x},  
$$  
$$
\Delta_{h_y}u_{ij}(t) = \frac{u(x_{i},y_{j+1},t) -2u(x_i,y_j,t) + u(x_{i},y_{j-1},t)}{h^2_y},  
$$
$u_{ij}(t) = u(x_i,y_j,t)$ and $f(x_i,y_j,t,u_{ij}(t)) = f_{ij}(t,u_{ij}(t))$.

Letting $u_{h,ij}(t) \approx u_{ij}(t)$ in the absence of the order terms $\mathcal{O}(h^2_x)$, $\mathcal{O}(h^2_y)$, we can obtain the system
\begin{equation}\label{eq:semidiscrete}
	\mathbf{u}'_{h}(t) + A_h \mathbf{u}_{h}(t) = \mathbf{f}(t,\mathbf{u}_{h}(t)), 
\end{equation} 
where 
$A_h = I_y \otimes A_x + A_y \otimes I_x$, the matrix $A_{x} = \frac{1}{h^2_{x}}[a_{ij}]$ 
is an $(m_x - 1) \times (m_x - 1)$-tridiagonal with $a_{i,i}=2\kappa + \frac{q}{2}h_x^2$, $a_{i,j}=-\kappa$ for $j=i\pm 1$, 
the matrix $A_{y}$ is $(m_y - 1) \times (m_y - 1)$ defined in a similar way as $A_x$ with $h_x$ replaced by 
$h_y$. Further $I_x$ and $I_y$ are identity matrices of same sizes as $A_x$ and $A_y$ respectively. 

Proceeding in a similar way for \((x_i, y_j, z_k) \in \Omega_h^{(3)}\), we have \eqref{eq:semidiscrete} with 
$$A_h = I_z\otimes I_y \otimes A_x + I_z\otimes A_y \otimes I_x + A_z\otimes I_y \otimes I_x,$$   
where $A_x$ is defined similar to the two-dimension case with the only change on diagonal given by 
$a_{i,i}=2\kappa + \frac{q}{3}h_x^2$. The matrices $A_y$ and $A_z$ are defined in same way and 
$A_z$ is $(m_z - 1) \times (m_z - 1)$. 

Letting a superscript $\top$ denote a matrix/vector transpose, then in two-dimensions, 
$$\mathbf{u}_h(t) = [\mathbf{u}_h^{[1]}(t), \mathbf{u}_h^{[2]}(t), \ldots, \mathbf{u}_h^{[m_y - 1]}(t)]^\top, 
\mathbf{u}_h^{[j]}(t) = [u_{h,1j}(t), u_{h,2j}(t), \ldots, u_{h,(m_x-1)j}(t)]^\top,
$$ 
and 
$$
\mathbf{f}(t) = [\mathbf{f}^{[1]}(t), \mathbf{f}^{[2]}(t), \ldots, \mathbf{f}^{[m_y - 1]}(t)]^\top, 
\mathbf{f}^{[j]}(t) = [f_{1j}(t), f_{2j}(t), \ldots, f_{(m_x-1)j}(t)]^\top. 
$$ 
Similarly, in three-dimensions 
$$\mathbf{u}_h(t) = [\mathbf{u}_h^{[1]}(t), \mathbf{u}_h^{[2]}(t), \ldots, \mathbf{u}_h^{[m_z - 1]}(t)]^\top, 
\mathbf{u}_h^{[k]}(t) = [\mathbf{u}_{h}^{[1][k]}(t), \mathbf{u}_{h}^{[2][k]}(t), \ldots, \mathbf{u}_{h}^{[m_y-1][k]}(t)]^\top,
$$ 
with 
$$
\mathbf{u}_h^{[j][k]}(t) = [u_{h,1jk}(t), u_{h,2jk}(t), \ldots, u_{h,(m_x-1)jk}(t)]^\top,
$$
and the vector $\mathbf{f}(t)$ is defined in a similar way. 

\subsection{Time-stepping Scheme}\label{subsec:time-stepping scheme} 
Consider the temporal mesh points \( t_n = n\tau \), \( n = 0, 1, 2, \ldots, N \), where \( \tau = T/N \). 
The construction of exponential time differencing (ETD) schemes for \eqref{eq:semidiscrete},
\[
\frac{d}{dt}\mathbf{u}_h(t) + A_h \mathbf{u}_h(t) = \mathbf{f}(t, \mathbf{u}_h(t)),
\]
is based on the mild form of the solution, given by
\begin{equation}\label{eq:mild_solution}
	\mathbf{u}_h(t_{n+1}) = e^{-A_h\tau}\mathbf{u}_h(t_n) 
	+ \int_{t_n}^{t_{n+1}} e^{-A_h(t_{n+1}-t)} \mathbf{f}(t, \mathbf{u}_h(t))\, dt, 
	\quad n = 0, 1, \ldots, N-1.
\end{equation}
The desired ETD schemes are then obtained by applying suitable quadrature approximations 
to the integral term. For further details on the derivation and analysis of exponential time 
differencing methods, we refer the reader to \cite{Cox:2002,Hochbruck:2005b} 
and the references therein. 

To extend the ETD approach to multidimensional problems efficiently, we employ a 
dimension-splitting strategy that exploits the separable structure of the spatial operator. 
This construction relies on several fundamental properties of matrices and matrix exponentials, 
which we summarize in the following proposition. 

\begin{proposition}\label{prop:Exp-properties}
	Suppose \( A \) and \( B \) are square matrices with real entries such that \( AB = BA \). Then the following relations hold: 
	\begin{enumerate}[label = \roman*)]
		\item $e^{A+B} = e^{A} e^{B} = e^{B} e^{A}$.
		\item $B$ commutes with every polynomial in $A$, i.e., $p(A) B = B p(A)$ for all polynomials $p$.
		In particular, $e^{A} B = B e^{A}$.
		\item If $B$ is invertible, then $B^{-1} A = A B^{-1}$ and $B^{-1}$ commutes with $e^{A}$,
		that is, $e^{A} B^{-1} = B^{-1} e^{A}$.
	\end{enumerate}
	
\end{proposition}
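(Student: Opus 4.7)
The plan is to work from the power-series definition of the matrix exponential, $e^{M}=\sum_{k\ge 0} M^{k}/k!$, and to prove the three statements sequentially, with each part leveraging the previous one. I would begin with part (ii), since the polynomial commutation result is the workhorse underlying both (i) and (iii).

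For part (ii), the strategy is a short induction: from $AB=BA$ I would show $A^{k}B=BA^{k}$ for every $k\ge 0$, using $A^{k+1}B=A(A^{k}B)=A(BA^{k})=(AB)A^{k}=BA^{k+1}$. Linearity in the coefficients of $p$ immediately gives $p(A)B=Bp(A)$ for every polynomial. Extending the commutation to $e^{A}$ follows by writing $e^{A}$ as the norm-convergent limit of its partial sums $S_{N}(A)=\sum_{k=0}^{N}A^{k}/k!$ and using continuity of left- and right-multiplication by the fixed matrix $B$ in any submultiplicative matrix norm.

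For part (i), I would expand the product $e^{A}e^{B}$ as a Cauchy product of absolutely convergent series and reindex by $n=j+k$ to obtain $e^{A}e^{B}=\sum_{n\ge 0}\frac{1}{n!}\sum_{k=0}^{n}\binom{n}{k}A^{k}B^{n-k}$. The commutativity hypothesis is precisely what licenses the binomial expansion $(A+B)^{n}=\sum_{k=0}^{n}\binom{n}{k}A^{k}B^{n-k}$, collapsing the inner sum and yielding $e^{A+B}$. The identity $e^{A}e^{B}=e^{B}e^{A}$ then follows by swapping $A$ and $B$ (the hypothesis is symmetric) or by applying part (ii) directly with $p$ taken to be the exponential.

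For part (iii), I would first promote $AB=BA$ to $B^{-1}A=AB^{-1}$ by multiplying on both sides by $B^{-1}$: right-multiplication gives $A=BAB^{-1}$, and left-multiplication then yields $B^{-1}A=AB^{-1}$. Since $A$ and $B^{-1}$ now form a commuting pair, part (ii) applied to this pair delivers $e^{A}B^{-1}=B^{-1}e^{A}$ at once.

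The main technical subtlety, which I would flag but not belabor, is the justification of the series rearrangement in part (i); this rests on absolute convergence of the exponential series in any submultiplicative matrix norm, which permits the Cauchy-product manipulation and the interchange of summation order. Beyond that, the argument is entirely standard and requires no deeper machinery.
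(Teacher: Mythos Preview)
Your proposal is correct and entirely standard. The paper itself does not supply a proof of this proposition: it is stated as a collection of well-known facts about commuting matrices and matrix exponentials and is used immediately thereafter without justification. Your approach---induction on powers for part~(ii), the Cauchy product and binomial identity for part~(i), and conjugation by $B^{-1}$ to reduce part~(iii) to part~(ii)---is exactly the textbook argument one would give, and your remark about absolute convergence in a submultiplicative norm correctly identifies the only analytic point requiring care. There is nothing to compare against, and nothing to correct.
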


Now, for the dimension-splitting ETD scheme, we decompose the discrete operator as 
\( A_h = A_1 + A_2 \), where the sub-operators correspond to independent spatial directions. 
For the two-dimensional case, we define
\(
A_1 = I_y \otimes A_x, 
\quad 
A_2 = A_y \otimes I_x,
\)
while for the three-dimensional case we take
\(
A_1 = I_z \otimes I_y \otimes A_x, 
\quad 
A_2 = I_z \otimes A_y \otimes I_x + I_y \otimes I_x \otimes A_z.
\)
By standard properties of the Kronecker product, one verifies that \( A_1 \) and \( A_2 \) commute and that both matrices are invertible in their respective settings. 
Thus, using the first property of Proposition \ref{prop:Exp-properties} we have \(e^{-A_h} = e^{-A_1}e^{-A_2}\). As such, the mild form 
\eqref{eq:mild_solution} can be written as  
\begin{equation}\label{eq:mild_solution_DS}
	\mathbf{u}_h(t_{n+1}) = e^{-\tau A_1} e^{-\tau A_2} \mathbf{u}_h(t_{n}) 
	+ \int_{t_n}^{t_{n+1}} e^{-A_1 (t_{n+1} - t)} 
	\tilde{\mathbf{f}}(t, \mathbf{u}_h(t))\, dt,
\end{equation}
where
\[
\tilde{\mathbf{f}}(t, \mathbf{u}_h(t)) = e^{-A_2 (t_{n+1} - t)} \mathbf{f}(t, \mathbf{u}_h(t)).
\]
Equation~\eqref{eq:mild_solution_DS} serves as the basis for constructing 
dimension-splitting exponential time differencing schemes in two and three spatial dimensions.

To this end, define the dimension split exponential time differencing scheme of interest in this paper: $U_h^n \approx \mathbf{u}_h^n$, \(n = 1, 2, 3, \dots, N-1\), by 
\begin{equation}\label{eq:ETD_linear_integral}
	U_h^{n+1} 
	= e^{-A_1\tau} e^{-A_2\tau} U_h^n 
	+ \int_{t_n}^{t_{n+1}} e^{-A_1(t_{n+1} - t)} \mathbf{p}(t)\, dt, 
\end{equation} 
where $\mathbf{p}(t) \approx \mathbf{f}(t, \mathbf{u}_h(t))$ on each subinterval $\mathcal{I}_n$, $n = 0, 1, 2, \ldots N-1$, is a piecewise linear interpolant:  
$$
\mathbf{p}(t) =  \left[\tilde{\mathbf{f}}^n + \frac{(t-t_n)}{\tau} 
\left(\mathbf{f}(t_{n+1},W_h^{n+1}) - \tilde{\mathbf{f}}^n\right) \right], 
$$ 
with  \(
\tilde{\mathbf{f}}^n = \tilde{\mathbf{f}}(t_n, U_h^n)
\) and 

\begin{align}\label{eq:ETD_const_integral}
	W_h^{n+1} 
	&= e^{-A_1\tau} e^{-A_2\tau} U_h^n 
	+ \int_{t_n}^{t_{n+1}} e^{-A_1(t_{n+1} - t)} \tilde{\mathbf{f}}^n \, dt 
\end{align} 
is introduced to ensure the ETD solution \(U_h^n\) is explicit. 

Evaluating the integrals in \eqref{eq:ETD_linear_integral} and \eqref{eq:ETD_const_integral} in closed form gives the fully discrete formulation:
\begin{equation}\label{eq:ETD_const_fully_discrete}
	W_h^{n+1} 
	= e^{-A_1\tau} e^{-A_2\tau} U_h^n 
	+ A_1^{-1} \left[I - e^{-A_1\tau}\right] e^{-A_2\tau} 
	\mathbf{f}(t_n, U_h^n),
\end{equation}
and
\begin{align}\label{eq:ETD_linear_fully_discrete}
	U_h^{n+1} 
	&= W_h^{n+1} 
	+ \left( A_1^{-1}\tau - A_1^{-2} + A_1^{-2} e^{-A_1\tau} \right)
	\frac{ \mathbf{f}(t_{n+1}, W_h^{n+1}) 
		- e^{-A_2\tau} \mathbf{f}(t_n, U_h^n) }{\tau}.
\end{align}
Equations~\eqref{eq:ETD_const_fully_discrete}--\eqref{eq:ETD_linear_fully_discrete} define 
the two-staged dimension-splitting ETD2-RK scheme, denoted ETD2RK-DS which efficiently exploits 
the separable structure of \( A_h = A_1 + A_2 \).

\section{Numerical Implementation}\label{Sec:Numerical_Aspects} 
This section describes the practical realization of the proposed ETD2RK--DS
scheme. 
We derive explicit, implementation-ready algorithms that realize the
dimension-splitting strategy through systematic tensor reshaping and splicing,
thereby reducing each multidimensional system to collections of
one-dimensional subsystems. 
Efficient solution strategies for these resulting one-dimensional problems are then
developed, followed by complete algorithms for two- and three-dimensional problems.

A central focus of this section is the efficient implementation of the fully
discrete scheme \eqref{eq:ETD_linear_fully_discrete}. To this end, we employ the
Pad\'e approximant $P_{0,2}$ to approximate the matrix exponential and present
two complementary implementation strategies: a Sylvester-equation-based
formulation developed in this work and a classical LU-factorization approach,
which is used as a reference for performance comparison.

We begin by outlining the Pad\'e-based $P_{0,2}$ ETD2RK--DS formulation and its
dimension-split structure, which forms the basis for the implementation
strategies developed in the remainder of this section. 

\subsection{Pad\'e-based ETD2RK--DS formulation}\label{subsec:pade_etd2rk_ds}
For the two-dimension case, replacing the exponential terms in 
\eqref{eq:ETD_const_fully_discrete} and \eqref{eq:ETD_linear_fully_discrete}
by the Pad\'e approximant
\[
P_{0,2}(x) = 2(x^2 + 2x + 2 I)^{-1},
\]
and using the last two properties in Proposition \ref{prop:Exp-properties}, we can have 
\begin{equation}\label{eq:ETD_const_fully_discrete_Rational_2D}
	\widehat{W}_h^{n+1} 
	= Q_1(\tau A_1) Q_1(\tau A_2) \widehat{U}_h^n 
	+ \tau Q_2(\tau A_1) Q_1(\tau A_2) \mathbf{f}(t_n, \widehat{U}_h^n),
\end{equation}
and
\begin{equation}\label{eq:ETD_linear_fully_discrete_Rational_2D}
	\widehat{U}_h^{n+1} 
	= \widehat{W}_h^{n+1} 
	+ \tau Q_3(\tau A_1)
	\left[
	\mathbf{f}(t_{n+1}, \widehat{W}_h^{n+1})
	- Q_1(\tau A_2)\mathbf{f}(t_n, \widehat{U}_h^n)
	\right],
\end{equation}
where the rational functions \( Q_\ell \) are defined as
\begin{align*}
	Q_1(x) &= P_{0,2}(x), 
	& Q_2(x) &= (x + 2I)(x^2 + 2x + 2I)^{-1}, \\
	Q_3(x) &= (x + I)(x^2 + 2x + 2I)^{-1}.
\end{align*}

Moreover, these rational functions admit partial fraction decompositions that 
can be exploited for enhanced computational efficiency. 
Specifically, they can be expressed as
\[
Q_\ell(x) = 2\,\mathrm{Re}\!\left( r_\ell (x - sI)^{-1} \right),
\quad \ell = 1, 2, 3,
\]
where
\(
s = -1 + \iota, \;
r_1 = -\iota, \;
r_2 = \tfrac{1 - \iota}{2}, \;
r_3 = \tfrac{1}{2},
\)
and \( \iota = \sqrt{-1} \).

In the three-dimensional case, we decompose
\(
A_2 = A_2^{(y)} + A_2^{(z)},
\)
where 
\( A_2^{(y)} = I_z \otimes A_y \otimes I_x \) and \( A_2^{(z)} = A_z \otimes I_y \otimes I_x \).
Since these matrices commute, we have
\( e^{-\tau A_2} = e^{-\tau(A_2^{(y)} + A_2^{(z)})} = e^{-\tau A_2^{(y)}} e^{-\tau A_2^{(z)}} \). Then, following the same 
construction, the three-dimensional case yields 
\begin{align}\label{eq:ETD_const_fully_discrete_Rational_3D}
	\widehat{W}_h^{n+1}
	&= Q_1(\tau A_1) 
	Q_1(\tau A_2^{(y)}) 
	Q_1(\tau A_2^{(z)}) \widehat{U}_h^n \nonumber \\
	&\quad
	+ \tau Q_2(\tau A_1) 
	Q_1(\tau A_2^{(y)}) 
	Q_1(\tau A_2^{(z)}) \mathbf{f}(t_n, \widehat{U}_h^n),
\end{align}
and
\begin{equation}\label{eq:ETD_linear_fully_discrete_Rational_3D}
	\widehat{U}_h^{n+1}
	= \widehat{W}_h^{n+1}
	+ \tau Q_3(\tau A_1)
	\left[
	\mathbf{f}(t_{n+1}, \widehat{W}_h^{n+1})
	- Q_1(\tau A_2^{(y)}) Q_1(\tau A_2^{(z)}) 
	\mathbf{f}(t_n, \widehat{U}_h^n)
	\right].
\end{equation}
We refer to \eqref{eq:ETD_const_fully_discrete_Rational_2D}-\eqref{eq:ETD_linear_fully_discrete_Rational_2D} 
and \eqref{eq:ETD_const_fully_discrete_Rational_3D}-\eqref{eq:ETD_linear_fully_discrete_Rational_3D} as the Pad\'e 
based ETD2RK-DS in the two and three dimensional cases, respectively. 

Since implementing the above schemes requires solving linear systems in place of the multiplication of matrix exponential with vectors,  
we next show how dimension splitting can be employed to decompose these systems into a collection of smaller, one-dimensional problems. 

\subsection{Dimension Splitting and Problem-Size Reduction}\label{subsec:Dimension_Split}
We now discuss how the dimension splitting is employed to decompose the multidimensional problem 
into a collection of smaller, one-dimensional systems. 
For clarity, we first present the two-dimensional case. 
Recall that \( A_1 = I_y \otimes A_x \) and \( A_2 = A_y \otimes I_x \).

Consider evaluating the matrix-vector product 
\( Q_\ell(\tau A_\nu)\mathbf{g} \), where 
\( \mathbf{g} \in \mathbb{R}^{(m_x-1)(m_y-1)} \) 
is a vector corresponding to the two-dimensional discretization grid,
\[
\mathbf{g} = [\mathbf{g}^{[1]}, \mathbf{g}^{[2]}, \ldots, \mathbf{g}^{[m_y - 1]}]^{\mathsf{T}}, 
\quad 
\mathbf{g}^{[j]} = [g_{1j}, g_{2j}, \ldots, g_{(m_x-1)j}]^{\mathsf{T}}.
\]
Recalling the partial-fraction representation of \( Q_\ell \), we can write
\[
Q_\ell(\tau A_\nu)\mathbf{g} 
= 2\,\mathrm{Re}\!\left( r_\ell (\tau A_\nu - sI)^{-1} \mathbf{g} \right),
\qquad 
\nu = 1, 2, \quad \ell = 1, 2, 3,
\]
which is equivalent to solving the linear system
\begin{equation}\label{eq:linear system xyz direction}
	(\tau A_\nu - sI)\mathbf{v} = r_\ell \mathbf{g},
\end{equation}
so that \( Q_\ell(\tau A_\nu)\mathbf{g} = 2\,\mathrm{Re}(\mathbf{v}) \). 
Here, \( \mathbf{v} \in \mathbb{R}^{(m_x-1)(m_y-1)} \) is defined analogously to \( \mathbf{g} \).

\subsubsection{Two-Dimensional Case}\label{subsec:2D Dimension Spliting}
Starting with \( A_1 \), note that \( A_1 = I_y \otimes A_x \) is a block tridiagonal matrix 
whose diagonal blocks are all identical and equal to \( A_x \). 
Hence, the linear system \eqref{eq:linear system xyz direction} with \( \nu = 1 \) 
decouples into \( m_y - 1 \) independent subsystems, 
each with coefficient matrix \( A_x \). 
Therefore, solving \eqref{eq:linear system xyz direction} for \( \nu = 1 \) 
reduces to solving the one-dimensional problems
\begin{equation}\label{eq:linear system x direction 1D}
	(\tau A_x - sI)\mathbf{v}^{[j]} = r_\ell \mathbf{g}^{[j]},
	\qquad j = 1, 2, \ldots, m_y - 1.
\end{equation}

For \( \nu = 2 \), unlike the case of \( A_1 \), the structure of \( A_2 = A_y \otimes I_x \) 
does not immediately yield decoupled blocks. 
However, by carefully permuting the vectorization order of the unknowns, 
the system can be rearranged into a form that separates the \( y \)-direction. 
Specifically, solving 
\(
(\tau A_2 - sI)\mathbf{v} = r_\ell \mathbf{g}
\)
is equivalent to solving
\(
(\tau \tilde{A}_2 - sI)\tilde{\mathbf{v}} = \tilde{\mathbf{g}},
\)
followed by reshaping \(\tilde{\mathbf{v}}\) to recover \(\mathbf{v}\), where
\[
\tilde{\mathbf{g}} 
= [\tilde{\mathbf{g}}^{[1]}, \tilde{\mathbf{g}}^{[2]}, \ldots, \tilde{\mathbf{g}}^{[m_x - 1]}]^{\mathsf{T}},
\quad 
\tilde{\mathbf{g}}^{[i]} 
= [g_{i1}, g_{i2}, \ldots, g_{i(m_y-1)}]^{\mathsf{T}}, 
\]
and \(\tilde{\mathbf{v}}\) is defined the same way.
The corresponding coefficient matrix is
\(
\tilde{A}_2 = I_x \otimes A_y,
\)
which consists of \( m_x - 1 \) decoupled one-dimensional blocks, 
each with coefficient matrix \( A_y \). 
Consequently, solving \eqref{eq:linear system xyz direction} for \( \nu = 2 \) 
amounts to computing
\begin{equation}\label{eq:linear system y direction 1D}
	(\tau A_y - sI)\tilde{\mathbf{v}}^{[i]} = r_\ell \tilde{\mathbf{g}}^{[i]},
	\qquad i = 1, 2, \ldots, m_x - 1,
\end{equation}
and subsequently recovering \( \mathbf{v} \) by appropriately reshuffling 
the entries of \( \tilde{\mathbf{v}} \).

\subsubsection{Three-Dimension Case} 
Similar to the two-dimensional case, the matrix 
\( A_1 = I_z \otimes I_y \otimes A_x \) 
is block tridiagonal, with \( A_x \) repeated along its diagonal blocks. 
Consequently, solving a linear system with \( A_1 \) as its coefficient matrix 
reduces to solving \( (m_y - 1)\cdot(m_z - 1) \) one-dimensional systems, 
each with coefficient matrix \( A_x \).  

In particular, consider the linear system \eqref{eq:linear system xyz direction} 
with \( \nu = 1 \) and 
\[
\mathbf{g} 
= [\mathbf{g}^{[1]}, \mathbf{g}^{[2]}, \ldots, \mathbf{g}^{[m_z - 1]}]^{\mathsf{T}},
\qquad 
\mathbf{g}^{[k]} 
= [\mathbf{g}^{[1][k]}, \mathbf{g}^{[2][k]}, \ldots, \mathbf{g}^{[m_y - 1][k]}]^{\mathsf{T}},
\]
where each subvector 
\(
\mathbf{g}^{[j][k]} = [g_{1jk}, g_{2jk}, \ldots, g_{(m_x - 1)jk}]^{\mathsf{T}}.
\)
Defining \( \mathbf{v} \) in an analogous manner, 
the problem reduces to the set of one-dimensional systems
\begin{equation}\label{eq:linear system x direction 1D 3D case}
	(\tau A_x - sI_x)\mathbf{v}^{[j][k]} = r_\ell \mathbf{g}^{[j][k]},
	\quad 
	j = 1, 2, \ldots, m_y - 1, \text{ for each } 
	\,\,k = 1, 2, \ldots, m_z - 1.
\end{equation}

For \( \nu = 2 \) in \eqref{eq:linear system xyz direction}, 
the structure of \( A_2 = A_2^{(y)} + A_2^{(z)} \) 
is not block diagonal in its native form. 
However, as in the two-dimensional case, 
block diagonalization can be achieved by a suitable rearrangement 
of the vectorized system. 
Starting with \( A_2^{(y)} \), 
solving \eqref{eq:linear system xyz direction} with 
\( A_\nu = A_2^{(y)} \) 
is equivalent to solving
\(
(\tau \tilde{A}_2^{(y)} - sI)\tilde{\mathbf{v}} = \tilde{\mathbf{g}},
\)
where \( \tilde{A}_2^{(y)} = I_x \otimes I_z \otimes A_y \) and
\[
\mathbf{g} 
= [\mathbf{g}^{[1]}, \mathbf{g}^{[2]}, \ldots, \mathbf{g}^{[m_z - 1]}]^{\mathsf{T}}, 
\qquad
\tilde{\mathbf{g}}^{[k]} 
= [\tilde{\mathbf{g}}^{[1][k]}, \tilde{\mathbf{g}}^{[2][k]}, \ldots, \tilde{\mathbf{g}}^{[m_x - 1][k]}]^\top,
\]
with each
\(
\tilde{\mathbf{g}}^{[i][k]} = [g_{i1k}, g_{i2k}, \ldots, g_{i(m_y - 1)k}]^{\mathsf{T}}.
\)
Under this reformulation, the vector \( \tilde{\mathbf{v}} \) 
is obtained by solving the set of one-dimensional systems
\begin{equation}\label{eq:linear system y direction 1D 3D case}
	(\tau A_y - sI_y)\tilde{\mathbf{v}}^{[i][k]} = r_\ell \tilde{\mathbf{g}}^{[i][k]},
	\quad 
	i = 1, 2, \ldots, m_x - 1, \text{ for each } 
	k = 1, 2, \ldots, m_z - 1.
\end{equation}

Similarly, for \( A_\nu = A_2^{(z)} \) in 
\eqref{eq:linear system xyz direction}, 
we consider the reformulated system
\(
(\tau \check{A}_2^{(z)} - sI)\check{\mathbf{v}} = \check{\mathbf{g}},
\)
where \( \check{A}_2^{(z)} = I_x \otimes I_y \otimes A_z \), and
\[
\check{\mathbf{g}} 
= [\check{\mathbf{g}}^{[1]}, \check{\mathbf{g}}^{[2]}, \ldots, \check{\mathbf{g}}^{[m_y - 1]}]^{\mathsf{T}}, 
\qquad
\check{\mathbf{g}}^{[j]} 
= [\check{\mathbf{g}}^{[1][j]}, \check{\mathbf{g}}^{[2][j]}, \ldots, 
\check{\mathbf{g}}^{[m_x - 1][j]}]^{\mathsf{T}},
\]
with
\(
\check{\mathbf{g}}^{[i][j]} 
= [g_{ij1}, g_{ij2}, \ldots, g_{ij(m_z - 1)}]^{\mathsf{T}}.
\)
In this case, the vector \( \check{\mathbf{v}} \) 
is obtained by solving \( (m_x - 1)(m_y - 1) \) 
independent one-dimensional problems of size \( (m_z - 1) \):
\begin{equation}\label{eq:linear system z direction 1D 3D case}
	(\tau A_z - sI_z)\check{\mathbf{v}}^{[i][j]} = r_\ell \check{\mathbf{g}}^{[i][j]},
	\quad 
	i = 1, 2, \ldots, m_x - 1, \text{ for each } 
	j = 1, 2, \ldots, m_y - 1.
\end{equation} 

The slicing and rearrangement procedures described above transform a single multidimensional linear solve into a collection of independent 
one-dimensional problems. For a 2D grid with $m$ points per direction, we solve $\mathcal{O}(m)$ tridiagonal systems of size $\mathcal{O}(m)$, 
yielding total cost $\mathcal{O}(m^2)$. This represents an order-of-magnitude improvement over solving the original $m^2 \times m^2$ banded 
system, which would cost $\mathcal{O}(m^3)$ operations per solve. In 3D, the gain is even more substantial: from $\mathcal{O}(m^5)$ to $\mathcal{O}(m^3)$.	

\subsection{Efficient Linear Solvers}
We now discuss efficient numerical techniques for solving the one-dimensional systems derived in the previous subsection. 
Two approaches are considered: a classical LU factorization method and a Sylvester-equation-based formulation 
developed here for the ETD schemes. The LU-based method is presented first, primarily to serve as a baseline for comparison. 

\subsubsection{LU Factorization-Based Implementation}
In any of the one-dimensional systems such as \eqref{eq:linear system x direction 1D} or 
\eqref{eq:linear system y direction 1D}, the coefficient matrix does not depend on the index \( i \). 
Hence, a single LU factorization can be computed once and reused for all subsequent right-hand sides. 
Specifically, we compute
\[
L_x U_x = \tau A_x - sI_x, \quad \text{ so that } 
L_x U_x \mathbf{v}^{[i]} = r_\ell \mathbf{g}^{[i]}.
\]
The solution proceeds via the standard forward and backward substitution steps:
\[
L_x \boldsymbol{\theta}_x^{[i]} = r_\ell \mathbf{g}^{[i]}, 
\qquad 
U_x \mathbf{v}^{[i]} = \boldsymbol{\theta}_x^{[i]}.
\]
The same procedure applies for systems in the \( y \)- and \( z \)-directions, 
using the corresponding matrices \( A_y \) and \( A_z \). 
Once the LU factors are available, the computational cost per right-hand side is minimal, 
making this approach suitable for problems with a moderate number of grid points in each spatial direction.

\subsubsection{Sylvester Equation-Based Implementation}
We next consider an alternative formulation based on the Sylvester equation, 
which offers a more efficient solution strategy for systems with diagonalizable 
operators. Consider again the one-dimensional problem \eqref{eq:linear system x direction 1D}, 
which can be written equivalently as
\begin{equation}\label{eq:Sylvester_matrix_vector}
	A_x \mathbf{v}^{[i]} - s I_x \mathbf{v}^{[i]} = r_\ell \mathbf{g}^{[i]}.
\end{equation}
This is a special case of the Sylvester equation \( AX + XB = C \), 
for which several efficient algorithms have been documented; see, e.g., 
\cite{Casulli:2024,Teran:2011}. 
An efficient diagonalization-based variant, discussed in 
\cite{Garrappa:2022,Sarumi:2025}, 
is adopted in this work.

Since \( A_x \) is symmetric and has distinct eigenvalues, it is diagonalizable as
\(
A_x = P_x D_x P_x^{\mathsf{T}},
\)
where \( P_x \) is orthogonal. 
Applying this transformation to \eqref{eq:Sylvester_matrix_vector} gives
\[
(D_x - sI_x)\,\bar{\mathbf{v}}^{[i]} = r_\ell \bar{\mathbf{g}}^{[i]},
\]
where 
\( \bar{\mathbf{v}}^{[i]} = P_x^{\mathsf{T}}\mathbf{v}^{[i]} \) 
and 
\( \bar{\mathbf{g}}^{[i]} = P_x^{\mathsf{T}}\mathbf{g}^{[i]} \).
The solution can be obtained efficiently via componentwise (Hadamard) division:
\[
\bar{\mathbf{v}}^{[i]} = r_\ell\, \bar{\mathbf{g}}^{[i]} \oslash \mathbf{d},
\]
where \( \mathbf{d} = \mathrm{diag}(D_x - sI_x) \) 
is the column vector containing the diagonal entries of \( D_x - sI_x \).
Finally, the solution in the original coordinate system is recovered as
\[
\mathbf{v}^{[i]} = P_x \bar{\mathbf{v}}^{[i]}.
\]
This diagonalization-based solver requires the eigen-decomposition of \( A_x \) only once, 
after which all subsequent systems can be solved with negligible cost 
using matrix-vector multiplications and elementwise operations. 
Such reuse makes the method particularly well suited for large-scale problems 
where multiple right-hand sides arise at each time step.	

\subsection{Algorithmic Realization} 
We now outline a sequence of computational steps for implementing the proposed ETD2RK-DS scheme. 
Let 
\(
\mathbf{w}_1 = Q_1(\tau A_2)\widehat{U}_h^n
\)
and 
\(
\mathbf{w}_2 = Q_1(\tau A_2)\mathbf{f}^n,
\)
where 
\(
\mathbf{f}^n = \mathbf{f}(t_n, \widehat{U}_h^n).
\)
Subsequently, we define
\(
\mathbf{w}_3 = Q_1(\tau A_2)\mathbf{w}_1
\)
and 
\(
\mathbf{w}_4 = Q_2(\tau A_2)\mathbf{w}_2.
\)

Since the matrices \( A_x \), \( A_y \), and \( A_z \) arise from spatial discretizations that are independent of both the time level \( t_n \) and spatial index \( i \), their spectral decompositions,
\(\tau A_\nu = P_\nu D_\nu P_\nu^{\mathsf{T}}, \,\, \nu \in \{x, y, z\},\)
can be computed once at the beginning of the simulation and reused throughout the entire time integration.

Following the discussion in Subsection~\ref{subsec:2D Dimension Spliting}, 
the expressions for \( \mathbf{w}_1 \) and \( \mathbf{w}_2 \) are reformulated in terms of one-dimensional problems. 
For instance, in the two-dimensional case, this reduces to solving, for 
\( i = 1, 2, \ldots, m_x - 1 \),
\[
(\tau A_y - sI_y)\,\widetilde{\mathbf{w}}_1^{n[i]} = r_\ell\,\widetilde{\widehat{U}}^{n[i]},
\qquad 
(\tau A_y - sI_y)\,\widetilde{\mathbf{w}}_2^{n[i]} = r_\ell\,\widetilde{\mathbf{f}}^{n[i]}.
\]
These systems are then solved efficiently using the precomputed diagonalizations, 
as detailed in Algorithms~\ref{algo:2D Implementation} and~\ref{algo:3D Implementation}. 
Throughout the algorithms, the symbol $\odot$ denotes componentwise (Hadamard) multiplication of vectors.

\begin{algorithm} %[H]
	\caption{Implementation of the ETD2RK-DS for two-dimension problem}\label{algo:2D Implementation}
	\begin{algorithmic}
		\State Step 1: Precompute $\{P_x, D_x, P_x^\top\}$ for $A_x$,  and  $\{P_y, D_y, P_y^\top\}$ for $A_y$
		
		\State Step 2: Precompute column vectors $\mathbf{d}_x^{(\ell)} = \text{Re}(r_\ell \oslash \text{diag}(D_x - sI_x))$, \(\ell = 1, 2, 3, \,\, \)  
		and $\mathbf{d}_y^{(1)} = \text{Re}(r_1\oslash \text{diag}(\tau D_y - sI_y))$ 
		
		\State Step 3: given $U_h^0$ and $\mathbf{f}$, then for each time mesh $t_n, \,\, n\in {0, 1, \dots, N-1}$  
		
		\Statex \hspace{1em}Step 3.1: (\textbf{Compute} \(\mathbf{w}_1 = Q_1(\tau A_2) \widehat{U}_h^n\) and \(\mathbf{w}_2  = Q_1(\tau A_2) \mathbf{f}(t_n, \widehat{U}_h^n)\)) \Statex \hspace{2em}for $i = 1, 2, \dots, m_x - 1$  
		\Statex \hspace{2em}3.1.1: $\widebar{\widetilde{\widehat{U}}}^{n[i]} = P_y^\top\widetilde{\widehat{U}}^{n[i]} $,  $\widebar{\tilde{\mathbf{f}}}^{n[i]} = P_y^\top \tilde{\mathbf{f}}^{n[i]} $ 
		\Statex \hspace{2em}3.1.2: $ \widetilde{\mathbf{w}} _1^{n[i]} = 2P_y\cdot \left(\widebar{\widetilde{\widehat{U}}}^{n[i]} \odot \mathbf{d}_y^{(1)}\right)$  
		\Statex \hspace{2em}3.1.3: $ \widetilde{\mathbf{w}}_2^{n[i]} = 2P_y \cdot \left(\widebar{\tilde{\mathbf{f}}}^{n[i]} \odot \mathbf{d}_y^{(1)}\right)$   
		
		\State \hspace{1em}Step 3.2: for $\nu = 1, 2$ 
		\Statex \hspace{2em}3.2.1: concatenate the vectors $\widetilde{\mathbf{w}}_\nu^{n[i]}$, \(i = 1, 2, \dots, m_x - 1,\) to construct $\widetilde{\mathbf{w}}_\nu$  
		\Statex \hspace{2em}3.2.2: rearrange the entries of $\widetilde{\mathbf{w}}_\nu$ to obtain $\mathbf{w}_\nu$  
		\State \hspace{1em}Step 3.3: (\textbf{Compute} \(\widehat{W}^{n+1}_h\) and \(\widehat{U}_h^{n+1}\)) 
		\Statex \hspace{2em}for $j = 1, 2, \dots, m_y - 1$  
		\Statex \hspace{2em}3.3.1: $\widebar{\mathbf{w}}_1^{n[j]} = P_x^\top \mathbf{w}_1^{n[j]}$, $\quad$ $\widebar{\mathbf{w}}_2^{n[j]} = P_x^\top \mathbf{w}_2^{n[j]}$ 
		\Statex \hspace{2em}3.3.2: $ \widebar{\mathbf{w}}_3^{n[j]} = \widebar{\mathbf{w}}_1^{n[j]} \odot \mathbf{d}_x^{(1)}$,  $\quad$ $ \widebar{\mathbf{w}}_4^{n[j]} = \widebar{\mathbf{w}}_2^{n[j]} \odot \mathbf{d}_x^{(2)}$
		\Statex \hspace{2em}3.3.3: $\widehat{W}_h^{(n+1)[j]} = 2P_x \cdot (\widebar{\mathbf{w}}_3^{n[j]} + \tau \widebar{\mathbf{w}}_4^{n[j]})$  
		\Statex \hspace{2em}3.3.4: $\widebar{\mathbf{g}}^{n[j]} = P_x^\top \mathbf{f}(t_{n+1}, \widehat{W}_h^{(n+1)[j]})  - \widebar{\mathbf{w}}_2^{n[j]}$ 
		\Statex \hspace{2em}3.3.5: $\mathbf{w}_5^{n[j]} = 2P_x\cdot (\widebar{\mathbf{g}}^{n[j]} \odot \mathbf{d}_x^{(3)})$ 
		\Statex \hspace{2em}3.3.6: $\widehat{U}_h^{(n+1)[j]} = \widehat{W}_h^{(n+1)[j]} + \mathbf{w}_5^{n[j]}$  
		
		\State \hspace{1em}Step 3.4: concatenate the vectors $\widehat{U}_h^{(n+1)[j]}$, \(j = 1, 2, \dots, m_y - 1,\) to construct $\widehat{U}_h^{n+1}$  
	\end{algorithmic}
\end{algorithm} 

\begin{algorithm}
	\caption{Implementation of the ETD2RK-DS for the three-dimensional problem}\label{algo:3D Implementation} 
	\begin{algorithmic}
		\State Step 1: Precompute $\{P_z, D_z, P_z^\top\}$ for $A_z$, $\{P_y, D_y, P_y^\top\}$ for $A_y$, and $\{P_x, D_x, P_x^\top\}$ for $A_x$ 
		
		\State Step 2: Precompute the column vectors $\mathbf{d}_x^{(\ell)} = \mathrm{Re}(r_\ell \oslash \mathrm{diag}(\tau D_x - sI))$, \(\ell = 1, 2, 3\),  
		$\mathbf{d}_y^{(1)} = \mathrm{Re}(r_1 \oslash \mathrm{diag}(\tau D_y - sI))$, and $\mathbf{d}_z^{(1)} = \mathrm{Re}(r_1 \oslash \mathrm{diag}(\tau D_z - sI))$
		
		\State Step 3: given $U^0$ and $\mathbf{f}$, then for each index $n\in {0, 1, \dots, N-1}$  
		\Statex \hspace{1em}Step 3.1: [z--direction, solve: \(\mathbf{w}_1 = Q_1(\tau A_2^{(z)})\widehat{U}_h^n\) and \(\mathbf{w}_2 = Q_1(\tau A_2^{(z)}) \mathbf{f}(t_n, \widehat{U}_h^n)\)] \\
		\Statex \hspace{1em}For $j = 1,\dots,m_y-1$; for $i = 1,\dots,m_x-1$:
		\Statex \hspace{2em}3.1.1 $\widebar{\widecheck{\widehat{U}}}^{n[i][j]} = P_z^\top\widecheck{\widehat{U}}^{n[i][j]}, \quad \widebar{\widecheck{\mathbf{f}}}^{n[i][j]} = P_z^\top \widecheck{\mathbf{f}}^{n[i][j]} $  
		\Statex \hspace{2em}3.1.2 $ \widecheck{\mathbf{w}} _1^{n[i][j]} = 2P_z \cdot \bigg(\widebar{\widecheck{\widehat{U}}}^{n[i][j]} \odot \mathbf{d}_z^{(1)} \bigg)$
		\Statex \hspace{2em}3.1.3 $ \widecheck{\mathbf{w}}_2^{n[i][j]} =  2P_z  \cdot \bigg(\widebar{\widecheck{\mathbf{f}}}^{n[i][j]} \odot \mathbf{d}_z^{(1)} \bigg)$   
		\Statex \hspace{1em}Step 3.2: for $\nu = 1, 2$ 
		\Statex \hspace{2em}3.2.1 concatenate the vectors $\widecheck{\mathbf{w}}_\nu^{n[i][j]}$ to construct $\widecheck{\mathbf{w}}_\nu$  
		\Statex \hspace{2em}3.2.2 rearrange the entries of $\widecheck{\mathbf{w}}_\nu$ to obtain $\mathbf{w}_\nu$  
		\Statex \hspace{1em}Step 3.3: [y--direction, solve: \(\mathbf{w}_3 = Q_1(\tau A_2^{(y)})\mathbf{w}_1\) and \(\mathbf{w}_4 = Q_1(\tau A_2^{(y)})\mathbf{w}_2\)] 
		\\
		\Statex \hspace{1em}For $k = 1, 2, \dots, m_z - 1$; for $i = 1, 2, \dots, m_x - 1$:
		\Statex \hspace{2em}3.3.1 $\widebar{\widetilde{\mathbf{w}}}_1^{n[i][k]} = P_y^\top\widetilde{\mathbf{w}}_1^{n[i][k]}, \quad \widebar{\widetilde{\mathbf{w}}}_2^{n[i][k]} = P_y^\top\widetilde{\mathbf{w}}_2^{n[i][k]} $ 
		\Statex \hspace{2em}3.3.2 $ \widetilde{\mathbf{w}}_3^{n[i][k]} = 2P_y \cdot \bigg(\widebar{\widetilde{\mathbf{w}}}_1^{n[i][k]} \odot \mathbf{d}_y^{(1)}\bigg)$  
		\Statex \hspace{2em}3.3.3 $ \widetilde{\mathbf{w}}_4^{n[i][k]} = 2P_y \cdot\bigg(\widebar{\widetilde{\mathbf{w}}}_2^{n[i][k]}\odot \mathbf{d}_y^{(1)}\bigg)$   
		\Statex \hspace{1em}Step 3.4: for $\nu = 3, 4$ 
		\Statex \hspace{2em}3.4.1 concatenate the vectors $\widetilde{\mathbf{w}}_\nu^{n[i][k]}$ to construct $\widetilde{\mathbf{w}}_\nu$  
		\Statex \hspace{2em}3.4.2 rearrange the entries of $\widetilde{\mathbf{w}}_\nu$ to obtain $\mathbf{w}_\nu$ 
		\Statex \hspace{1em}Step 3.5: [x--direction solve and ETD update: Compute \(\widehat{W}^{n+1}_h\) and \(\widehat{U}^{n+1}_h\)]
		\\
		\Statex \hspace{1em}For $k = 1, 2, \dots, m_z - 1$; for $j = 1, 2, \dots, m_y - 1$ 
		\Statex \hspace{2em}3.5.1 $\widebar{\mathbf{w}}_3^{n[j][k]} = P_x^\top \mathbf{w}_3^{n[j][k]}, \quad \widebar{\mathbf{w}}_4^{n[j][k]} = P_x^\top \mathbf{w}_4^{n[j][k]}$ 
		\Statex \hspace{2em}3.5.2 $ \widebar{\mathbf{w}}_5^{n[j][k]} = \widebar{\mathbf{w}}_3^{n[j][k]} \odot \mathbf{d}_x^{(1)}$  
		\Statex \hspace{2em}3.5.3 $ \widebar{\mathbf{w}}_6^{n[j][k]} = \widebar{\mathbf{w}}_4^{n[j][k]} \odot \mathbf{d}_x^{(2)}$   
		\Statex \hspace{2em}3.5.4 $\widehat{W}_h^{(n+1)[j][k]} = 2P_x\cdot(\widebar{\mathbf{w}}_5^{n[j][k]} + \tau \widebar{\mathbf{w}}_6^{n[j][k]})$ 
		\Statex \hspace{2em}3.5.5 $\widebar{\mathbf{g}}^{n[j][k]} = P_x^\top \mathbf{f}(t_{n+1}, \widehat{W}_h^{(n+1)[j][k]})  - \widebar{\mathbf{w}}_4^{n[j][k]}$ 
		\Statex \hspace{2em}3.5.6 $\mathbf{w}_7^{n[j][k]} = 2P_x (\widebar{\mathbf{g}}^{n[j][k]} \odot \mathbf{d}_x^{(3)})$ 
		\Statex \hspace{2em}3.5.7 $\widehat{U}_h^{(n+1)^{[j][k]}} = \widehat{W}_h^{(n+1)^{[j][k]}} + \mathbf{w}^{n[j][k]}_7$  
		
		\Statex \hspace{1em}Step 3.6: concatenate the vectors $\widehat{U}_h^{(n+1)^{[j][k]}}$, [$j = 1, 2, \dots, m_y - 1$, for each $k = 1, 2, \dots, m_z - 1$] to construct $\widehat{U}_h^{n+1}$ 
	\end{algorithmic}
\end{algorithm}

\begin{remark}\label{rem:algorithms}
	\begin{enumerate}			
		\item The complex-valued calculations in Algorithms \ref{algo:2D Implementation} and \ref{algo:3D Implementation} are confined to the Hadamard division precomputed in Step~2; 
		thereafter, the time stepping can be carried out using real-valued matrix–vector operations.
		
		\item The nested loops appearing in the algorithms can be bypassed by formulating the operations 
		in matrix-matrix form rather than matrix-vector form. 
		This reformulation enables the use of high-performance routines 
		and can significantly improve computational efficiency on modern hardware. 
		
		\item The Sylvester equation type reformulation (diagonalization-based) solvers used in Algorithms~\ref{algo:2D Implementation} 
		and~\ref{algo:3D Implementation} can be replaced by classical LU-based solvers with only minor 
		modifications to the overall algorithmic structure. 
	\end{enumerate}
\end{remark}

\section{Stability and Convergence Analysis}\label{sec:analysis} 
To facilitate the subsequent stability and convergence analysis, 
we introduce the following notation and standing assumptions.

We adopt the submultiplicative infinity norm, defined for a vector 
$\mathbf{b} \in \mathbb{R}^m$ and a matrix $B \in \mathbb{R}^{m\times m}$ by
\(\|\mathbf{b}\| = \max_{1 \le i \le m} |b_i|\), and 
\(\|B\| = \max_{1 \le j \le m} \sum_{i=1}^m |b_{ij}|\), 
respectively. 
The initial data $u_0(\mathbf{x})$ is assumed to be uniformly bounded so that
\(\|U_h^0\| \le \sup_{\mathbf{x} \in \Omega}|u_0(\mathbf{x})| =: M_0,\)
where $M_0$ is independent of the mesh parameters.

\begin{assumption}[Local boundedness and local Lipschitz on bounded sets]\label{Assump:boundedness_Lipschitz}
	\begin{enumerate}%[label=(\Alph*)]
		\item 
		\textbf{Locally Lipschitz source} Let $\mathcal{D} \subset \mathbb{R}^d$ be a suitable region. We assume that the function 
		$\mathbf{f}: (0, T] \times \mathcal{D} \to \mathbb{R}^d$ is locally Lipschitz continuous in the second 
		variable, that is there exists a constant $L = L(T,R) > 0$ such that 
		\begin{equation*}
			||\mathbf{f}(t,\mathbf{u}_h(t)) - \mathbf{f}(t,\tilde{\mathbf{u}}_h(t))|| \le L||\mathbf{u}_h(t) - \tilde{\mathbf{u}}_h(t)||, 
		\end{equation*} 
		for all $t \in (0, T]$ and $\max(\|\mathbf{u}_h(t)\|, \|\tilde{\mathbf{u}}_h(t)\|) \le R$.
		
		\item \textbf{Boundedness of $f$ on bounded sets.} 
		Fix $T>0$. There exist real number $C_b = C_b(M)$ such that
		\[
		\|u\|\le M \Rightarrow \sup_{t\in(0,T]}\|f(t,u)\| \le C_b(M).
		\] 
		Equivalently: $\mathbf{f}$ is bounded whenever $\mathbf{u}_h$ is bounded (the bound depends on $M$).
	\end{enumerate}
\end{assumption} 
In addition, the matrices $A_1$ and $A_2$ arising from the spatial discretization 
are diagonally dominant. 
Hence, following \cite{Du:2019}, the associated exponential propagators satisfy
\begin{equation}\label{ineq:exp_estimate}
	\|e^{-\tau A_i}\| \le e^{-\frac{q h^2}{2}} \le 1, 
	\qquad i=1,2 
	\quad \text{(two-dimensional case)},
\end{equation}
and in three dimensions a similar estimate holds,
\(\|e^{-\tau A_i}\| \le e^{-\frac{q h^2}{3}}.\)

\begin{theorem}[Stability]\label{thm:stability_ETD_dimension_split}
	Let $U_h^n$ be the fully discrete ETD solution produced by the dimension-splitting scheme \eqref{eq:ETD_linear_fully_discrete} with timestep 
	$\tau>0$ and $t_n=n\tau\le T$. Assume the source function \(f\) satisfies part 2 of assumption \ref{Assump:boundedness_Lipschitz} and the 
	bound \(C_b\) satisfies a linear growth condition, that is, there exists constants \(c_1, c_2 > 0\) such that \(C_b(M) \le c_1 M + c_2\).  
	Then there exists a constant $M_*>0$, depending only on $(M_0,T,c_1,c_2)$ (but \emph{independent} of $h,\tau,n$ and \(N\)), such that
	\[
	\|U_h^n\|\;\le\; M_* \qquad \text{for all } n \text{ with } t_n\le T.
	\]
	In particular, the ETD2RK-DS scheme is (uniformly) stable on $[0,T]$.
\end{theorem}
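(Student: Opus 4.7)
The plan is to take infinity norms in the two stages of the scheme \eqref{eq:ETD_const_fully_discrete}--\eqref{eq:ETD_linear_fully_discrete}, bound the $\varphi$-type matrix coefficients using integral representations of the matrix exponential, and close the argument with a discrete Grönwall inequality.

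First I would establish the two key matrix-coefficient bounds. Starting from the standard identity
\[
A_1^{-1}\bigl(I - e^{-A_1\tau}\bigr) \;=\; \int_{0}^{\tau} e^{-A_1 s}\, ds,
\]
the submultiplicativity of $\|\cdot\|$ together with \eqref{ineq:exp_estimate} gives $\|A_1^{-1}(I-e^{-A_1\tau})\|\le \tau$. A second integration by parts (or equivalently the $\varphi_2$ integral representation) yields
\[
\tau A_1^{-1} - A_1^{-2} + A_1^{-2} e^{-A_1\tau} \;=\; \int_{0}^{\tau} (\tau - s)\, e^{-A_1 s}\, ds,
\]
so its norm is bounded by $\tau^2/2$. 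Combined with $\|e^{-\tau A_2}\|\le 1$, these are the only structural estimates needed.

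Next I would take norms in \eqref{eq:ETD_const_fully_discrete} and \eqref{eq:ETD_linear_fully_discrete}. Writing $M_n := \|U_h^n\|$ and using the linear growth $\|\mathbf{f}(t,u)\|\le C_b(\|u\|)\le c_1\|u\| + c_2$, the intermediate stage satisfies
\[
\|W_h^{n+1}\| \;\le\; M_n + \tau\bigl(c_1 M_n + c_2\bigr) \;=\; (1+c_1\tau) M_n + c_2\tau,
\]
and inserting this into the second stage produces
\[
M_{n+1} \;\le\; \|W_h^{n+1}\| + \tfrac{\tau}{2}\Bigl(c_1\|W_h^{n+1}\| + c_2 + c_1 M_n + c_2\Bigr).
\]
Substituting the bound on $\|W_h^{n+1}\|$ and collecting terms yields a recurrence of the form
\[
M_{n+1} \;\le\; \bigl(1 + K\tau\bigr) M_n + L\tau,
\]
where $K$ and $L$ depend only on $c_1,c_2$ (and on $\tau$ in a harmless, bounded way once $\tau\le 1$, say).

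Finally I would invoke the discrete Grönwall lemma: iterating the recurrence and using $n\tau\le T$ gives
\[
M_n \;\le\; e^{KT} M_0 + \frac{L}{K}\bigl(e^{KT}-1\bigr) \;=:\; M_*,
\]
a constant depending only on $(M_0,T,c_1,c_2)$ and independent of $h$, $\tau$, and $n$. This bound transfers to $\|W_h^{n+1}\|$ as well, completing the uniform stability statement; the three-dimensional case is identical after replacing $\|e^{-\tau A_i}\|\le e^{-qh^2/2}$ by its three-dimensional analogue. The main (really only) technical obstacle is establishing the $\mathcal{O}(\tau)$ and $\mathcal{O}(\tau^2)$ coefficient bounds in the right norm; once these are in place via the integral representations, the remainder is a standard Grönwall closure.
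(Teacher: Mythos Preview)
Your proposal is correct and follows essentially the same approach as the paper: both obtain $\|W_h^{n+1}\|\le\|U_h^n\|+\tau\|\mathbf{f}(t_n,U_h^n)\|$ and $\|U_h^{n+1}\|\le\|W_h^{n+1}\|+\tfrac{\tau}{2}(\|\mathbf{f}(t_{n+1},W_h^{n+1})\|+\|\mathbf{f}(t_n,U_h^n)\|)$ via the integral representations of the $\varphi$-functions (the paper works directly from the integral forms \eqref{eq:ETD_const_integral}--\eqref{eq:ETD_linear_integral}, which is the same computation), and then close with a discrete Gr\"onwall argument. The only cosmetic difference is that the paper first sets up an induction $\|U_h^n\|\le M_n$ using the abstract bound $C_b$ and only afterwards invokes the linear-growth hypothesis on the $M_n$ recursion, whereas you apply $\|\mathbf{f}(t,u)\|\le c_1\|u\|+c_2$ immediately to obtain the one-step recurrence $M_{n+1}\le(1+K\tau)M_n+L\tau$; your route is slightly more direct but mathematically equivalent.
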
  

\begin{proof}
	First, we derive an estimate on the helper scheme \(W^{n+1}_h\) using the integral form \eqref{eq:ETD_const_integral}:
	\[
	W_h^{n+1}
	= e^{-\tau A_1} e^{-\tau A_2} U_h^n
	+ \int_{t_n}^{t_{n+1}} e^{-A_1 (t_{n+1}-s)}\, \tilde{\mathbf f}^n \, ds,
	\quad
	\tilde{\mathbf f}^n := e^{-\tau A_2}\, \mathbf{f}(t_n,U_h^n).
	\]
	Applying triangle inequality, the submultiplicativity property of the norm, and the estimate \eqref{ineq:exp_estimate}: 
	\begin{equation}\label{ineq:Wn_pre_stab}
		\|W_h^{n+1}\|
		\le \|U_h^n\| + \tau\,\|\tilde{\mathbf f}^n\|
		\le \|U_h^n\| + \tau\,\|\mathbf{f}(t_n,U_h^n)\|.
	\end{equation}
	Similarly, for the second substep (linear interpolation of the source):
	\[
	U_h^{n+1}
	= e^{-\tau A_1} e^{-\tau A_2} U_h^n
	+ \int_{t_n}^{t_{n+1}} e^{-A_1 (t_{n+1}-s)}
	\Big[ \tilde{\mathbf f}^n + \tfrac{s-t_n}{\tau}\big( f(t_{n+1},W_h^{n+1}) - \tilde{\mathbf f}^n \big)\Big] ds, 
	\]
	applying the triangle inequality, the submultiplicativity property of the norm, and the estimate \eqref{ineq:exp_estimate} again \eqref{ineq:Wn_pre_stab}, we get that  
	\begin{align}\label{ineq:Un_pre_stab}
		\|U_h^{n+1}\|
		&\le \|U_h^n\|
		+ \int_{t_n}^{t_{n+1}}\Big(\|\tilde{\mathbf f}^n\|
		+ \tfrac{s-t_n}{\tau}\big(\|\mathbf{f}(t_{n+1},W_h^{n+1})\|+\|\tilde{\mathbf f}^n\|\big)\Big)ds 
		\nonumber \\
		&\le \|U_h^n\| + \tau\,\|\tilde{\mathbf f}^n\| + \frac{\tau}{2}\big(\|\mathbf{f}(t_{n+1},W_h^{n+1})\|+\|\tilde{\mathbf f}^n\|\big) 
		\nonumber \\ 
		&\le \|U_h^n\| + \tau \|\mathbf{f}(t_n, U^n_h)\| + \frac{\tau}{2}\big(\|\mathbf{f}(t_{n+1},W_h^{n+1})\|+\|\mathbf{f}(t_n, U^n_h)\|\big)
	\end{align}
	We shall now proceed by induction to show that each \(U_h^n\) satisfies 
	\begin{equation}\label{ineq:Un_stab}
		\|U^n_h\| \le M_{n-1} + \frac{3}{2} \tau C_b(M_{n-1}) + \frac{\tau}{2} C_b(M_{n-1} + \tau C_b(M_{n-1})) =: M_n, 
	\end{equation} 
	for \(n = 1, 2, \dots, N\).
	
	Base case: \(U_h^1\), \(n = 0\) in \eqref{ineq:Un_pre_stab}, we have 
	\begin{align*}
		\|U_h^{1}\| 
		&\le \|U_h^0\| + \tau\,\|\mathbf{f}(t_0, U^0_h)\| + \frac{\tau}{2}\big(\|\mathbf{f}(t_{1},W_h^{1})\|+\|\mathbf{f}(t_0, U^0_h)\|\big).
	\end{align*}
	First, an estimate on \(\|W_h^1\|\) must be determined in order to use the local boundedness assumption in \ref{Assump:boundedness_Lipschitz}. 
	Since it has been assumed that \(\|U_h^0\| \le M_0\), then the local boundedness gives \(\|\mathbf{f}(t_0, U_h^0)\| \le C_b(M_0)\). As such, inequality \eqref{ineq:Wn_pre_stab} yields: 
	\[
	\|W_h^{1}\| \le \|U_h^0\| + \tau\,C_b(M_0) \le M_0 + \tau C_b(M_0). % =: M_1(M_0, \tau, C_b(M_0)). 
	\] 
	The above estimate gives that \(W_h^1\) is bounded and as such, \(\|\mathbf{f}(t_1, W_h^1)\| \le C_b(M_0 + \tau C_b(M_0))\). 
	Thus, we can proceed as follows   
	\begin{align*}
		\|U_h^{1}\| &\le M_0 + \tau\, C_b(M_0) + \frac{\tau}{2}\big(C_b(M_0 + \tau C_b(M_0))+ C_b(M_0)\big), 
		\\ 
		&\le M_0 + \frac{3}{2} \tau C_b(M_0) + \frac{\tau}{2} C_b(M_0 + \tau C_b(M_0)). %=: M_1
	\end{align*} 
	This establishes the base case. 
	
	The induction hypothesis assumes that \eqref{ineq:Un_stab} holds, i.e. \(\|U_h^n\| \le M_n\), for \(n = 2, 3, \dots, k\). In the induction step which comes next, we seek to estimate \(\|U_h^{k+1}\|\). 
	From \eqref{ineq:Un_pre_stab}, 
	\[
	\|U_h^{k+1}\| \le \|U_h^k\| + \tau\,\|\mathbf{f}(t_k, U^k_h)\| + \frac{\tau}{2}\big(\|\mathbf{f}(t_{k+1},W_h^{k+1})\|+\|\mathbf{f}(t_k, U^k_h)\|\big) 
	\] 
	By the induction hypothesis \(\|U_h^k\| \le M_k\), so the boundedness assumption implies \(\|\mathbf{f}(t_k,U_h^k)\| \le C_b(M_k)\). Then \eqref{ineq:Wn_pre_stab} gives 
	\(\|W_h^{k+1}\|\le M_k + \tau C_b(M_k)\). Consequently, \(\|\mathbf{f}(t_k,W^{k+1}_h)\| \le C_b(M_k + \tau C_b(M_k))\). As such,  
	\[
	\|U_h^{k+1}\| \le M_k + \tau\,C_b(M_k) + \frac{\tau}{2}\big(C_b(M_k + \tau C_b(M_k)) + C_b(M_k)\big) 
	\] 
	Hence, the estimate \eqref{ineq:Un_stab} holds for \(n \in \{1, 2, 3 \dots, N\}\) by induction. 
	Next is to show that the bound is uniform independent of \(n\). 
	
	Define \(\Phi_\tau(r) = 3C_b(r) + C_b (r + \tau C_b(r))\) and observe that: 
	\begin{align*}
		M_n &= M_{n-1} + \frac{\tau}{2}\Phi_\tau(M_{n-1}) = M_{n-2} + \frac{\tau}{2}[\Phi_\tau(M_{n-2}) +\Phi_\tau(M_{n-1})] 
		\\
		&= M_{n-3} + \frac{\tau}{2}[\Phi_\tau(M_{n-3}) + \Phi_\tau(M_{n-2}) +\Phi_\tau(M_{n-1})]. 
	\end{align*}
	Continuing this way yields the following expression 
	\[ 
	M_n = M_{0} + \frac{\tau}{2}\sum_{\ell = 0}^{n-1} \Phi_\tau(M_{\ell}).
	\] 
	The linear growth \(C_b(r) \le c_1 r + c_2\) can be used to obtained 
	\begin{align*}
		\Phi_\tau(r) &\le 3(c_1 r + c_2) + (c_1(r + \tau C_b(r)) + c_2) 
		\\ 
		&\le 3(c_1 r + c_2) + (c_1(r + \tau (c_1 r + c_2)) + c_2) \le c_1 r (4 + c_1 \tau) + (4 + \tau c_1) c_2
	\end{align*} 
	\begin{align*}
		M_n &\le M_0 + \frac{\tau}{2} \sum_{\ell = 0}^{n-1} [(4 + c_1 \tau) c_1 M_\ell + (\tau c_1 + 4) c_2] 
		\\ 
		&\le M_0 + \frac{n \tau}{2}(\tau c_1 + 4) c_2 + \frac{\tau}{2}(4 + c_1 \tau) c_1 \sum_{\ell = 0}^{n-1} M_\ell
	\end{align*} 
	Noting that \(n\tau = t_n \le T\), defining \(C_T = \frac{T}{2}(T c_1 + 4) c_2\) and using the discrete Gr\"onwall inequality \cite[p.~220--221]{Elaydi:2005} yields 
	\[ 
	M_n \le (M_0 + C_T) \exp\left(\frac{n \tau}{2}(4 + c_1 \tau)c_1\right) \le (M_0 + C_T) \exp\left(\frac{T}{2}(4 + c_1 T)c_1\right) =: M_*, 
	\] 
	where \(M_*\) depends on \(c_1, c_2, M_0\) and \(T\) but is independent of \(n, \tau\) and \(N\). Hence the proof.
\end{proof} 

\begin{remark}
	\begin{enumerate} 
		\item 
		The linear-growth assumption in Theorem~\ref{thm:stability_ETD_dimension_split} is not restrictive:
		saturated kinetics $f(u)=\frac{r\,u}{1+\alpha|u|}$ satisfy a global bound $|f(u)|\le r/\alpha$ (so $c_1=0$, $c_2=r/\alpha$).
		Allen--Cahn with $f(u)=u-u^3$, homogeneous Dirichlet boundary data and $|u_0|\le 1$, the maximum principle yields
		$|u(\cdot,t)|\le 1$ for all $t\in[0,T]$, see e.g. \cite{Liao:2020}, hence $|f(u)|\le 2|u|$. So, $(c_1,c_2)=(2,0)$. 
		Thus the hypothesis accommodates several widely-used nonlinearities with explicit constants. 
		
		\item 
		Example~\ref{exp:singular} is used to illustrate that the global linear-growth assumption 
		$C_b(r)\le c_1 r + c_2$ is a sufficient  condition but may not be strictly necessary. 
		In particular, for $f(u)=\tfrac{u}{1-u}$, $0<u<1-\varepsilon$, 
		the constant $c_1$ depends on the upper bound of $u$, 
		so the nonlinearity satisfies only a local linear-growth condition.
	\end{enumerate}
\end{remark}

\subsection{Pad\'e Approximation Error}

The convergence analysis relies on error estimates for the Pad\'e \(P_{0,2}\) approximation of the matrix exponential. 
In the next proposition we present relevant estimates on the matrix-valued Pad'e approximants required for the discretization 
matrices described in Section~\ref{subsec:spatial_Disc}. 

\begin{proposition}\label{prop:Pade-properties}
	Let $P_{0,2}(z)= \frac{2}{2+2z+z^2}$ be the $(0,2)$ Pad\'e approximant of $e^{-z}$,
	and let $A = A_i$, \(i = 1, 2\) be any of the discretization matrices in Section \ref{subsec:spatial_Disc}. 
	There exists a generic constant $C_A>0$
	(depending on the spatial discretization matrices $A_i$, $i=1,2$, but independent of
	$\tau$, $n$, and $N$) such that the following estimates hold.
	Here and below, $C_A$ may denote different constants in different inequalities.

	\begin{enumerate}[label=(\alph*)]
		\item \textbf{Approximation of the matrix exponential.}
		\[
		\|e^{-\tau A} - P_{0,2}(A\tau)\|  \le C_A\tau^3,  \quad \|A^{-2}\,(e^{-\tau A} - P_{0,2}(A\tau))\| \le C_A\tau^3,  
		\]
		\[
		\|I - P_{0,2}(A\tau)\| \le C_A \tau, \quad \|A^{-1}\,(I - P_{0,2}(A\tau))\| \;\le\; C_A\,\tau .
		\]
		\item \textbf{Matrix-Pad\'e bound.}
		\[
		\|P_{0,2}(A\tau)\| \;\le\; 1 + C_A \tau^3.
		\]
		
		\item \textbf{Matrix--exponential product estimate.}
		If $\tau \le 1$, there exists $C_{12} > 0$ such that %for all $\tau\in[0,\tau_0]$,
		\[
		\|\,e^{-\tau A_1}e^{-\tau A_2} - P_{0,2}(A_1\tau)P_{0,2}(A_2\tau)\,\| \;\le\; C_{12}\,\tau^3,
		\]
		\[ 
		\|\,P_{0,2}(A_1\tau)P_{0,2}(A_2\tau)\,\| \;\le\; 1 + C_{12}\,\tau^3.
		\] 
		
		\item \textbf{Quadratic remainder.} 
		The combination \(A^{-2}(P_{0,2}(A\tau)-I+\tau A)\) satisfies
		\[
		\|A^{-2}(P_{0,2}(A\tau)-I+\tau A)\| \le C_A\,\tau^2.
		\]
		
	\end{enumerate}
\end{proposition}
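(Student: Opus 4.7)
The plan is to reduce every claim to a scalar estimate on the rational function $h(u)=e^{-u}-P_{0,2}(u)$ and two closely related quantities, and then lift each scalar bound to the matrix level via the spectral decomposition of $A$. Since the discretization matrices $A_i$ are symmetric positive definite, we have $A=Q\Lambda Q^{\top}$ with $Q$ orthogonal, so for any scalar function $\varphi$ analytic on the spectrum,
\[
\|\varphi(A)\|_{\infty}\;\le\;\|Q\|_{\infty}\,\|Q^{\top}\|_{\infty}\,\max_{\lambda\in\sigma(A)}|\varphi(\lambda)|.
\]
The change-of-basis factor depends only on $A$ and is absorbed into $C_A$, as permitted by the statement.

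The scalar work consists of three pointwise inequalities on $(0,\infty)$,
\[
|h(u)|\le Cu^{3},\qquad |1-P_{0,2}(u)|\le Cu,\qquad |P_{0,2}(u)-1+u|\le Cu^{2},
\]
each obtained by verifying that the corresponding ratio extends continuously to $[0,\infty]$ and is therefore bounded. Near $u=0$ the limits are read off from the Taylor expansions $h(u)=-u^{3}/6+O(u^{4})$ and $P_{0,2}(u)=1-u+u^{2}/2+O(u^{3})$, while at $u=\infty$ the $1/u^{2}$ decay of $P_{0,2}$ forces each ratio to vanish; in the third case one also has the explicit identity $P_{0,2}(u)-1+u=u^{2}(u+1)/(u^{2}+2u+2)$. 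The auxiliary bound $|P_{0,2}(u)|\le 1$ for $u\ge 0$ is immediate from $u^{2}+2u+2\ge 2$.

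Part (a) then follows directly from the spectral lift with the substitution $u=\lambda\tau$: the first scalar inequality gives $|h(\lambda\tau)|\le C\|A\|^{3}\tau^{3}$, and multiplying by $\lambda^{-2}$ on the spectral side yields $|\lambda^{-2}h(\lambda\tau)|\le C\|A\|\tau^{3}$; the remaining two estimates in (a) come from $|1-P_{0,2}(\lambda\tau)|\le C\|A\|\tau$ and $|\lambda^{-1}(1-P_{0,2}(\lambda\tau))|\le C\tau$, where in the last the $\lambda$ cancels. Part (d) uses the third pointwise inequality to obtain $|\lambda^{-2}(P_{0,2}(\lambda\tau)-1+\lambda\tau)|\le C\tau^{2}$ \emph{uniformly} in $\lambda$. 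Part (b) follows from writing $P_{0,2}(\tau A)=e^{-\tau A}+(P_{0,2}(\tau A)-e^{-\tau A})$, applying the triangle inequality, and invoking the contractivity bound \eqref{ineq:exp_estimate} together with (a). For part (c) the algebraic identity
\[
e^{-\tau A_{1}}e^{-\tau A_{2}}-P_{0,2}(\tau A_{1})P_{0,2}(\tau A_{2})=\bigl(e^{-\tau A_{1}}-P_{0,2}(\tau A_{1})\bigr)e^{-\tau A_{2}}+P_{0,2}(\tau A_{1})\bigl(e^{-\tau A_{2}}-P_{0,2}(\tau A_{2})\bigr)
\]
splits the error into two pieces, each bounded by a multiple of $\tau^{3}$ via (a), (b) and \eqref{ineq:exp_estimate}; the resulting higher-order contribution $C^{2}\tau^{6}$ is absorbed into $C_{12}\tau^{3}$ using $\tau\le 1$. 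The norm bound on the Padé product in (c) follows from $(1+C_{A}\tau^{3})^{2}\le 1+C_{12}\tau^{3}$ under the same restriction.

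The main conceptual obstacle is that the familiar scalar Padé error $|e^{-u}-P_{0,2}(u)|=O(u^{3})$ is only a \emph{local} statement near $u=0$, whereas the eigenvalues of $A$ can be as large as $O(h^{-2})$ so that $\lambda\tau$ is by no means small. The observation that saves the argument is that the relevant ratios $h(u)/u^{3}$, $u^{-2}h(u)$, $(P_{0,2}(u)-1+u)/u^{2}$, etc., remain bounded on the entire half-line thanks to the $1/u^{2}$ decay of the denominator of $P_{0,2}$, so the pointwise inequalities hold \emph{globally}, at the price of $\|A\|$-dependent constants that are legitimately absorbed into $C_{A}$ and $C_{12}$.
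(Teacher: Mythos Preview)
Your proposal is correct and follows essentially the same route as the paper: diagonalize $A=Q\Lambda Q^{\top}$, reduce each estimate to a scalar bound on the corresponding function of $u=\lambda\tau$, and lift back via $\kappa_{\infty}(Q)$. The only cosmetic difference is that the paper obtains the scalar bounds through Taylor's theorem with the Lagrange remainder (explicitly computing $P_{0,2}'$, $P_{0,2}''$, $P_{0,2}'''$), whereas you argue that the relevant ratios extend continuously to $[0,\infty]$ and are therefore bounded; both arguments yield the same $\|A\|$-dependent constants absorbed into $C_A$.
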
 

\begin{proof}
	\begin{enumerate}
		\item The discretization matrices \(A_i\) are all symmetric and diagonalizable. As such  we can have \(A_i = Q D Q^\top\), which can be used to obtain 
		\[
		e^{-\tau A} - P_{0,2}(\tau A) = Q \left( e^{-\tau D} - P_{0,2}(\tau D)\right) Q^{\top} = Q \diag\left( \eta(\tau \lambda_\ell)\right) Q^\top, 
		\] 
		where \(\eta(x) =  e^{-x} - P_{0,2}(x)\) and \(\diag(\eta(\tau \lambda_\ell))\) is diagonal matrix with 
		\(\eta(\tau \lambda_1), \dots, \eta(\tau \lambda_m) \) on its diagonal. 
		Using the second Taylor polynomial of $\eta$ at \(x =0\) and the Taylor's theorem, we can have 
		\[ 
		\eta(\tau \lambda_\ell) = -\frac{e^{-\xi_\ell} + P'''_{0,2}(\xi_\ell)}{3!}(\tau \lambda_\ell)^3, \qquad 0 \le \xi_\ell \le \tau \lambda_\ell ,  
		\] 
		where    
		\(P'''_{0,2}(x) = -\frac{48 x (x+1)(x+2)}{(x^2 + 2x + 2)^4}\) is the third derivative of \(P_{0,2}\). 
		By submulitiplicity, 
		\( 
		\|e^{-\tau A} - P_{0,2}(\tau A)\| \le \kappa_\infty (Q) \max_{\ell} \left| \eta(\lambda_\ell \tau) \right| 
		\), where \(\kappa_\infty (Q) = \|Q\| \|Q^\top\| \). 
		Then, 
		\begin{align*}
			\|e^{-\tau A} - P_{0,2}(\tau A)\| &\le \kappa_\infty (Q) \max\limits_{i} \left|\frac{e^{-\xi_i} + P'''_{0,2}(\xi_i)}{3!} \lambda_i^3 \tau^3\right| 
			\\ 
			&\le \frac{1}{6}\kappa_\infty(Q) \sup_{[0, \lambda_\text{max} \tau ]} |e^{-\xi} + P'''_{0,2}(\xi)| \lambda^3_\text{max} \tau^3. 
		\end{align*} 
		The proof is complete by taking \(C_A = \frac{\lambda^3_\text{max}}{6} \kappa_\infty(Q) \sup_{[0, \infty)} |e^{-\xi} + P'''_{0,2}(\xi)| \).  
		
		For the second estimate, we note that 
		
		\noindent \(A^{-2} (e^{-A\tau} - P_{0,2}(A \tau)) = Q \diag(\frac{e^{-\lambda_\ell \tau} - P_{0,2}(\lambda_\ell \tau)}{\lambda_\ell^2}) Q^{\top}\). 
		Then proceeding in the same way as above, yields the desired bound, with
		
		\noindent \(C_A = \frac{\lambda_\text{max}}{6} \kappa_\infty(Q) \sup\limits_{[0, \infty)} |e^{-\xi} + P'''_{0,2}(\xi)| \)
		
		Derivation of the third bound also follows similar arguments as the first, where we replace the definition of $\eta$ by \(\eta(x) = 1 - P_{0,2}(x)\) 
		and take the Taylor polynomial of degree zero. Then the proposed estimate is attained with 
		\(C_A~=~\lambda_\text{max} \kappa_\infty(Q) \sup\limits_{[0, \infty)} |P'_{0,2}(\xi)|\).  
		
		Following similar arguments, the fourth bound can be obtained with \(C_A~=~\kappa_\infty(Q) \sup\limits_{[0, \infty)} |P'_{0,2}(\xi)|\). 
		
		\item  Using triangle inequality together with the first estimate in part (1) and the exponential bound \eqref{ineq:exp_estimate} we have 
		\[
		\| P_{0,2}(\tau A) \| \le \|  e^{-\tau A} \| + \|P_{0,2}(\tau A) - e^{-\tau A}\| \le 1 + C_A \tau^3.
		\] 
		
		\item For the first estimate, we use the exponential estimate \eqref{ineq:exp_estimate}, and the bounds in parts 1 and 2 to obtain
		\begin{align*}
			\|\,e^{-\tau A_1}e^{-\tau A_2} - P_{0,2}(A_1\tau)P_{0,2}(A_2\tau)\| &\le \|e^{-\tau A_1}\| \|e^{-\tau A_2} - P_{0,2}(A_2\tau)\| 
			\\ 
			&+ \|P_{0,2}(A_2\tau)\| \|e^{-\tau A_1} - P_{0,2}(A_1\tau)\| 
			\\ 
			&\le C_{A_2} \tau^3 + (1 + C_{A_2} \tau^3)C_{A_1} \tau^3. 
		\end{align*} 
		With $\tau\le 1$, it holds that \(\tau^6 \le \tau^3\). Then the proof is complete by choosing \(C_{12}~=~C_{A_1}+C_{A_2}+C_{A_1}C_{A_2}\).% \(C_{12}~=~\max\{C_{A_1}, C_{A_2}, C_{A_1}C_{A_2}\}\). 
		
		For the second estimate, we note that 
		\begin{align*}
			\|P_{0,2}(A_1\tau)P_{0,2}(A_2\tau)\| &\le \|e^{-\tau A_1}e^{-\tau A_2}\| + \|P_{0,2}(A_1\tau)P_{0,2}(A_2\tau) - e^{-\tau A_1}e^{-\tau A_2}\| 
			\\ 
			&\le 1 + C_{12} \tau^3 .
		\end{align*}
		
		\item 
		Using the diagonalization \(A = Q D Q^\top\), it can be shown that 
		\begin{align*}
			A^{-2}\left(P_{0,2}(A\tau) - I + \tau A\right) = Q \diag(\eta(\lambda_\ell \tau)/\lambda^2_\ell) Q^{T}, 
		\end{align*} 
		where $\eta(x) = P_{0,2}(x) - 1 + x$. Using the degree one Taylor polynomial of \(\eta(x)\) and proceeding as in the proof of the first estimate in part (1),  
		we get the desired bound with \(C_A = \kappa_\infty(Q) \max\limits_{[0, \infty)} |P''_{0,2}(\xi)|\).
	\end{enumerate}
\end{proof}

\subsection{Convergence Analysis} 
For the convergence analysis, we assume that the semidiscrete problem 
\eqref{eq:semidiscrete} admits a sufficiently smooth solution 
\(\mathbf{u}_h : [0,T] \to \mathbb{R}^n\).
For convenience, we introduce the notation
\[\Psi(t) := \mathbf{f}(t,\mathbf{u}_h(t)),\] 
from which \(\tilde{\Psi}(t) = e^{-A_2(t_{n+1} - t)} \mathbf{f}(t,\mathbf{u}_h(t))\).
The mapping 
\(\Psi : [0,T] \to \mathbb{R}^n,\; t\mapsto \mathbf{f}(t,\mathbf{u}_h(t))\),
is assumed to be sufficiently smooth on \([0,T]\). And we denote the relevant errors by
\[
\mathcal{E}_h^n := \mathbf{u}_h(t_n) - U_h^n,
\qquad
\widehat{\mathcal{E}}_h^n := \mathbf{u}_h(t_n) - \widehat{U}_h^n.
\]
We begin with a series of lemmas that estimate the errors associated with the
helper schemes~\eqref{eq:ETD_const_fully_discrete} and
\eqref{eq:ETD_const_fully_discrete_Rational_2D}
(as well as their three-dimensional counterpart
\eqref{eq:ETD_const_fully_discrete_Rational_3D}).
These intermediate results are then used to establish the main convergence
theorems corresponding to
\eqref{eq:ETD_linear_fully_discrete},
\eqref{eq:ETD_linear_fully_discrete_Rational_2D},
and
\eqref{eq:ETD_linear_fully_discrete_Rational_3D}.

\begin{lemma}\label{lem:helper_conv} 
	Let $W^n_h$ be the helper scheme \eqref{eq:ETD_const_fully_discrete}. Let $\mathbf{u}_h$ and \(\mathbf{f}\) be the true solution and source term, respectively, in \eqref{eq:semidiscrete}. Suppose $ \Psi  \in \mathcal{C}^1([0,T])$ and \(\mathbf{f}\) satisfies the Lipschitz condition in Assumption \ref{Assump:boundedness_Lipschitz}. Then, for $n = 0, 1, \dots, N-1$ 
	$$
	\|\mathbf{u}_h(t_{n+1}) - W_h^{n+1}\| \leq (1 + L \tau) \|\E_h^{n}\| + \max\limits_{I_{n}} \|\tilde{\Psi}'\| \tau^2
	$$
\end{lemma}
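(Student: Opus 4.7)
The plan is to subtract the helper scheme \eqref{eq:ETD_const_integral} from the mild form \eqref{eq:mild_solution_DS} evaluated at $t=t_{n+1}$, and then bound the residual using the exponential estimate \eqref{ineq:exp_estimate}, a first-order Taylor control on $\tilde{\Psi}$, and the local Lipschitz hypothesis on $\mathbf{f}$.

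First, I would write out the error identity
\[
\mathbf{u}_h(t_{n+1}) - W_h^{n+1} \;=\; e^{-\tau A_1}e^{-\tau A_2}\mathcal{E}_h^n \;+\; \int_{t_n}^{t_{n+1}} e^{-A_1(t_{n+1}-t)}\bigl[\tilde{\Psi}(t) - \tilde{\mathbf{f}}^n\bigr]\,dt,
\]
where $\tilde{\mathbf{f}}^n = e^{-\tau A_2}\mathbf{f}(t_n,U_h^n)$. Two applications of \eqref{ineq:exp_estimate} immediately give $\|e^{-\tau A_1}e^{-\tau A_2}\mathcal{E}_h^n\|\le\|\mathcal{E}_h^n\|$. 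For the integrand I would then insert $\pm\tilde{\Psi}(t_n)$ and split
\[
\tilde{\Psi}(t) - \tilde{\mathbf{f}}^n \;=\; \bigl[\tilde{\Psi}(t) - \tilde{\Psi}(t_n)\bigr] \;+\; e^{-\tau A_2}\bigl[\mathbf{f}(t_n,\mathbf{u}_h(t_n)) - \mathbf{f}(t_n,U_h^n)\bigr].
\]
The first bracket is controlled by $\max_{\mathcal{I}_n}\|\tilde{\Psi}'\|\,(t-t_n)$ through the mean value theorem applied to the assumed $\mathcal{C}^1$ map $\tilde{\Psi}$, while the second bracket is dominated by $L\|\mathcal{E}_h^n\|$ via the Lipschitz hypothesis in Assumption \ref{Assump:boundedness_Lipschitz}(1) combined with $\|e^{-\tau A_2}\|\le 1$.

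Integrating these two contributions against the bound $\|e^{-A_1(t_{n+1}-t)}\|\le 1$ produces a quadrature piece $\tfrac{1}{2}\max_{\mathcal{I}_n}\|\tilde{\Psi}'\|\,\tau^2$ and a linear piece $L\tau\|\mathcal{E}_h^n\|$. Adding these to the $\|\mathcal{E}_h^n\|$ bound on the leading term yields the claimed estimate, with the factor $1/2$ absorbed into $\max_{\mathcal{I}_n}\|\tilde{\Psi}'\|\tau^2$.

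The only delicate point I anticipate is justifying that the local Lipschitz constant $L$ can be taken uniform in $n$: one needs both $\mathbf{u}_h(t_n)$ and $U_h^n$ to lie in a common bounded set on which a single $L$ applies. This is secured by the smoothness of the semidiscrete solution together with the uniform stability bound $\|U_h^n\|\le M_*$ from Theorem \ref{thm:stability_ETD_dimension_split}, so choosing $R = \max(M_*,\sup_{[0,T]}\|\mathbf{u}_h\|)$ legitimizes the single constant $L=L(T,R)$ used above. Beyond this, the computation is a standard triangle-inequality exercise.
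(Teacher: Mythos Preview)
Your proof is correct and follows essentially the same approach as the paper's own argument: subtract \eqref{eq:ETD_const_integral} from \eqref{eq:mild_solution_DS}, use \eqref{ineq:exp_estimate} on the propagators, split the integrand via $\pm\tilde{\Psi}(t_n)$, and invoke the mean value (Lagrange) bound together with the Lipschitz condition. The only minor difference is that the paper uses the cruder bound $\|\tilde{\Psi}(t)-\tilde{\Psi}(t_n)\|\le\tau\max_{\mathcal{I}_n}\|\tilde{\Psi}'\|$ before integrating (avoiding your factor $1/2$ altogether), and it does not explicitly discuss the uniformity of $L$; your added remark on that point is a welcome clarification.
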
 

\begin{proof}
	From the integral forms \eqref{eq:mild_solution_DS} and \eqref{eq:ETD_const_integral}
	$$
	\mathbf{u}_h(t_{n+1}) - W_h^{n+1}  = e^{-A_1 \tau} e^{-A_2 \tau} [\mathbf{u}_h(t_{n}) - U_h^n] + \int_{t_n}^{t_{n+1}} e^{-A_1(t_{n+1} - t)}\left[\tilde{\Psi}(t) - \tilde{\mathbf{f}}(t_n,U^n_h)\right] dt.  
	$$ 
	Using triangle inequality, submultiplicativity, and the exponential bounds in \eqref{ineq:exp_estimate}, 
	$$
	||\mathbf{u}_h(t_{n+1}) - W_h^{n+1}||  \leq ||\E_h^n|| 
	+ \int_{t_n}^{t_{n+1}} \left[\left\|\tilde{\Psi}(t) - \tilde{\Psi}(t_n)\right\| 
	+ \left\|\tilde{\mathbf{f}}(t_n,\mathbf{u}_h(t_n)) - \tilde{\mathbf{f}}(t_n,U^n_h) \right\| \right] dt.  
	$$ 
	For the first term inside the integral, Lagrange error estimate gives  
	$\|\tilde{\Psi}(t) - \tilde{\Psi}(t_n)\| \leq \tau \max\limits_{I_n} \|\tilde{\Psi}'\|$. And for the second term, we apply 
	the Lipschitz continuity and the estimate in \eqref{ineq:exp_estimate}. Then, 
	$$
	\|\mathbf{u}_h(t_{n+1}) - W_h^{n+1}\| \leq \|\mathcal{E}_h^n\| 
	+ \int_{t_n}^{t_{n+1}} \left[\tau \max\limits_{I_n} \|\tilde{\Psi}'\| + L \left\|\mathbf{u}_h(t_n) - U^n_h \right\| \right] dt. 
	$$ 
	Thus, 
	$$
	\|\mathbf{u}_h(t_{n+1}) - W_h^{n+1}\| \leq (1 + L \tau) \|\E_h^n\| +  \max\limits_{I_n} \|\tilde{\Psi}'\| \tau^2, \quad n \in \{0, 1,\dots, N-1\}.  
	$$ 
\end{proof} 

\begin{lemma}\label{lem:helper_conv_Pade}
	Let $\widehat{W}^n_h$ in \eqref{eq:ETD_const_fully_discrete_Rational_2D} be the helper scheme based on the 
	Pad\'e approximant, $P_{0,2}$,   
	and let ${W}^n_h$ be the helper scheme  \eqref{eq:ETD_const_fully_discrete}. 
	Suppose the hypotheses of Theorem \ref{thm:stability_ETD_dimension_split} and Lemma \ref{lem:helper_conv} are satisfied, 
	then, for $n = 0, 1, \dots, N-1$ 
	$$
	||W_h^{n+1} - \widehat{W}_h^{n+1}|| \leq C \tau^3 + (1 + c\tau) ||U^n_h - \widehat{U}^n_h||,
	$$ 
	with constants $C,c>0$ independent of $\tau,n$ (but depending on bounds for $A_1,A_2$, the Lipschitz constant of $f$, and the uniform stability bound).
\end{lemma}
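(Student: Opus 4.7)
The strategy is to write $W_h^{n+1}-\widehat{W}_h^{n+1}$ as a sum of terms that each isolate a single replacement, either of an exact propagator by its Pad\'e counterpart or of $U_h^n$ by $\widehat{U}_h^n$, and to bound these terms using Proposition~\ref{prop:Pade-properties} together with the exponential bound \eqref{ineq:exp_estimate}, the uniform stability bound $M_*$ from Theorem~\ref{thm:stability_ETD_dimension_split}, and the local Lipschitz hypothesis.

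The starting algebraic observation is the identity
\[
A_1^{-1}\bigl(I - P_{0,2}(\tau A_1)\bigr) = \tau Q_2(\tau A_1),
\]
which follows directly from $I - P_{0,2}(x) = x(x+2I)(x^2+2x+2I)^{-1}$. This identifies $\widehat{W}_h^{n+1}$ as the exact Pad\'e analogue of $W_h^{n+1}$, obtained via $e^{-\tau A_i}\mapsto Q_1(\tau A_i)$ and $U_h^n\mapsto\widehat{U}_h^n$. I would then decompose $W_h^{n+1}-\widehat{W}_h^{n+1}=T_1+T_2+T_3$ with
\begin{align*}
T_1 &= \bigl[e^{-\tau A_1}e^{-\tau A_2}-Q_1(\tau A_1)Q_1(\tau A_2)\bigr]U_h^n + Q_1(\tau A_1)Q_1(\tau A_2)(U_h^n-\widehat{U}_h^n),\\
T_2 &= \bigl[A_1^{-1}(I-e^{-\tau A_1})e^{-\tau A_2} - \tau Q_2(\tau A_1)Q_1(\tau A_2)\bigr]\mathbf{f}(t_n,U_h^n),\\
T_3 &= \tau Q_2(\tau A_1)Q_1(\tau A_2)\bigl[\mathbf{f}(t_n,U_h^n)-\mathbf{f}(t_n,\widehat{U}_h^n)\bigr].
\end{align*}

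For $T_1$, part (c) of Proposition~\ref{prop:Pade-properties} supplies both $\|e^{-\tau A_1}e^{-\tau A_2}-Q_1(\tau A_1)Q_1(\tau A_2)\|\le C_{12}\tau^3$ and $\|Q_1(\tau A_1)Q_1(\tau A_2)\|\le 1+C_{12}\tau^3$, which together with $\|U_h^n\|\le M_*$ yield a $C\tau^3+(1+C_{12}\tau^3)\|U_h^n-\widehat{U}_h^n\|$ contribution. For $T_2$, using the identity above I would rewrite the operator as
\[
-\,A_1^{-1}\bigl(e^{-\tau A_1}-P_{0,2}(\tau A_1)\bigr)e^{-\tau A_2} \;+\; \tau Q_2(\tau A_1)\bigl[e^{-\tau A_2}-Q_1(\tau A_2)\bigr],
\]
and control each summand: the first via $\|A_1^{-1}(e^{-\tau A_1}-P_{0,2}(\tau A_1))\|\le \|A_1\|\cdot\|A_1^{-2}(e^{-\tau A_1}-P_{0,2}(\tau A_1))\|\le C\tau^3$ from part~(a), and the second via $\|\tau Q_2(\tau A_1)\|=\|A_1^{-1}(I-P_{0,2}(\tau A_1))\|\le C\tau$ combined with the cubic exponential bound $\|e^{-\tau A_2}-Q_1(\tau A_2)\|\le C\tau^3$, both from part~(a); part~2 of Assumption~\ref{Assump:boundedness_Lipschitz} applied at $\|U_h^n\|\le M_*$ bounds $\|\mathbf{f}(t_n,U_h^n)\|$, producing $\|T_2\|\le C\tau^3$. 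For $T_3$, the Lipschitz hypothesis on the bounded set containing $U_h^n$ and $\widehat{U}_h^n$ combined with $\|\tau Q_2(\tau A_1)Q_1(\tau A_2)\|\le C\tau$ gives $\|T_3\|\le CL\tau\|U_h^n-\widehat{U}_h^n\|$. Collecting and using $\tau\le 1$ to absorb $\tau^3$ into $\tau$ in the error-propagation factor yields the claimed bound $C\tau^3+(1+c\tau)\|U_h^n-\widehat{U}_h^n\|$.

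The main obstacle will be the mixed estimate $\|A_1^{-1}(e^{-\tau A_1}-P_{0,2}(\tau A_1))\|=O(\tau^3)$ required in $T_2$, since this combination is not one of the bounds listed explicitly in Proposition~\ref{prop:Pade-properties}; it is recovered from part~(a) at the cost of an additional factor of $\|A_1\|$, which is absorbed into the discretization-dependent constant permitted by the lemma. A secondary subtlety is ensuring that $\widehat{U}_h^n$ lies in the Lipschitz region so that the hypothesis applies to $T_3$; this is consistent with the stability bound for $U_h^n$ and an inductive control of $\|U_h^n-\widehat{U}_h^n\|$ to be carried out when this lemma is invoked in the subsequent convergence theorem.
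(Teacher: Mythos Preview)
Your proposal is correct and follows essentially the same approach as the paper. The paper organizes the telescoping by introducing an intermediate comparison quantity $\widecheck{W}_h^{n+1}$ (the Pad\'e formula evaluated at $U_h^n$ rather than $\widehat{U}_h^n$) and splits via $\|W_h^{n+1}-\widecheck{W}_h^{n+1}\|+\|\widecheck{W}_h^{n+1}-\widehat{W}_h^{n+1}\|$, but the resulting individual terms and the Pad\'e estimates invoked are exactly those in your $T_1,T_2,T_3$; in particular, your rewriting of the $T_2$ operator coincides with the paper's telescoping, and the paper likewise uses the bound $\|A_1^{-1}(e^{-\tau A_1}-P_{0,2}(\tau A_1))\|\le \check{C}_{A_1}\tau^3$ (asserted without comment), which you correctly flag as not being listed verbatim in Proposition~\ref{prop:Pade-properties}.
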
 
\begin{proof}
	Recalling the definition of $W_h^n$ in \eqref{eq:ETD_const_fully_discrete_Rational_2D}, 
	we consider $\widehat{W}_h^n$ in a more explicit form by simply replacing the exponentials in \eqref{eq:ETD_const_fully_discrete_Rational_2D} by the Pad\'e without simplification, in other words  
	\begin{equation*}
		\widehat{W}_h^{n+1} = P_{0,2}(A_1 \tau) P_{0,2}(A_2 \tau) \widehat{U}^n_h 
		+  A_1^{-1} \left[I - P_{0,2}(A_1 \tau)\right]P_{0,2}(A_2 \tau) \mathbf{f}(t_n,\widehat{U}_h^n).  
	\end{equation*}  
	Define 
	\begin{equation*}
		\widecheck{W}_h^{n+1} = P_{0,2}(A_1 \tau) P_{0,2}(A_2 \tau) U_h^n 
		+  A_1^{-1} \left[I - P_{0,2}(A_1 \tau)\right]P_{0,2}(A_2 \tau) \mathbf{f}(t_n,U_h^n),   
	\end{equation*} 
	as a comparison function by replacing $\widehat{U}_h^n$ in $\widehat{W}_h^{n+1}$ with $U_h^n$. 
	As such, 
	\begin{equation}\label{eq:err_wn_decompose}
		\left\|W_h^{n+1} -  \widehat{W}_h^{n+1}\right\| \le \left\|W_h^{n+1} -  \widecheck{W}_h^{n+1}\right\| + \left\|\widecheck{W}_h^{n+1} -  \widehat{W}_h^{n+1}\right\|
	\end{equation} 
	Using the triangle inequality, submultiplicity property of the infinity norm as well as the Pad\'e estimates (1) and (3) in Proposition \ref{prop:Pade-properties} the first term on the right hand side can be bounded as follows: 
	\begin{align*}
		\|W_h^{n+1} -  \widecheck{W}_h^{n+1}\| &\le \|e^{-A_1\tau}e^{-A_2\tau} -  P_{0,2}(A_1 \tau) P_{0,2}(A_2 \tau)\| \|U_h^n\| 
		\\ 
		&+\bigg(\|A_1^{-1}(I - P_{0,2}(A_1 \tau)) (e^{-A_2\tau} - P_{0,2}(A_2 \tau))  
		\\ 
		&+  A_1^{-1} (P_{0,2}(A_1 \tau) - e^{-A_1\tau}) e^{-A_2\tau}\|\bigg) \|f(t_n, U_h^n)\| 
		\\ 
		&\le C_{12}\tau^3 \|U_h^n\| + (C_{A_1} \tau \cdot C_{A_2} \tau^3 + \check{C}_{A_1}\tau^3) \|f(t_n, U_h^n)\|.
	\end{align*} 
	Since we assume the hypotheses of Theorem \ref{thm:stability_ETD_dimension_split} are satisfied, then we have \(\|f(t,U_h^n)\| \le c_1 \|U_h^n\| + c_2\) and 
	\(\|U_h^n\| \le M_*\). Taking $\tau \le 1$, we can have 
	\begin{equation}\label{eq:err_wn_term1}
		\left\|W_h^{n+1} -  \widecheck{W}_h^{n+1}\right\| \le C \tau^3, 
	\end{equation} 
	\(C = \max\{C_{12}M_*, C_{A_1} C_{A_2} (c_1 M_* + c_2), \check{C}_{A_1} (c_1 M_* + c_2)\}\). 
	Similarly, for the second term in \eqref{eq:err_wn_decompose}, the estimates (2) and (4) in Proposition \ref{prop:Pade-properties} together with using the triangle inequality, submultiplicity of the infinity norm, and the Lipschitz continuity of $f$, the following inequalities can be attained
	\begin{align}\label{eq:err_wn_term2}
		\|\widecheck{W}_h^{n+1} -  \widehat{W}_h^{n+1}\| &= \|P_{0,2}(A_1 \tau) P_{0,2}(A_2 \tau)\|\|U_h^n - \widehat{U}_h^n\|
		\nonumber 
		\\ 
		&+\bigg(A_1^{-1}\left(I - P_{0,2}(A_1 \tau)\right)P_{0,2}(A_2 \tau)\| \bigg) \|f(t_n, U_h^n) - f(t_n, \widehat{U}_h^n)\| 
		\nonumber
		\\ 
		&\le (1 + C_{12}\tau^3) \|U_h^n - \widehat{U}_h^n\| + L C_{A_1} \tau (1 + C_{A_2}\tau^3) \|U_h^n - \widehat{U}_h^n\| 
		\nonumber
		\\  
		\nonumber
		&\leq  (1 + C_{12}\tau^3 + L C_{A_1}\tau + L C_{A_1} C_{A_2}\tau^4) \|U^n_h - \widehat{U}^n_h\| 
		\\ 
		&\leq (1 + c\tau) \|U^n_h - \widehat{U}^n_h \|,  
	\end{align} 
	\(c = \max(C_{12}, L C_{A_2}, L C_{A_1} C_{A_2})\), where we have assumed \(\tau \le 1\) to have \(\tau^4 \le \tau^3 \le \tau\).
\end{proof}

The next result gives the error estimate for the dimension split ETD2RK-DS scheme \eqref{eq:ETD_linear_fully_discrete}. 
\begin{theorem}\label{thm:convergence ETD dimension split}
	Let $U^n_h$ be the ETD solution defined by the dimension split scheme \eqref{eq:ETD_linear_fully_discrete}. 
	Let $\mathbf{u}_h$ be the solution of \eqref{eq:semidiscrete}. Suppose $\Psi \in \mathcal{C}^2([0,T])$ and that \(\mathbf{f}\) satisfies 
	the Lipschitz condition in Assumption \ref{Assump:boundedness_Lipschitz}. Then, for $n = 0, 1, \dots, N$ 
	$$
	\|\mathbf{u}_h(t_{n}) - U_h^{n}\| \leq t_{n} e^{2 c t_{n}} \max\limits_{[0,T]} \left(L\|\tilde{\Psi}'(t)\| +  \frac{1}{8}\|\tilde{\Psi}^{(2)}(t)\|\right)\tau^2, 
	$$ 
	\(c = \max(2L, L^2)\).
\end{theorem}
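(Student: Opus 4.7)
The plan is to subtract the definition of $U_h^{n+1}$ in its integral form \eqref{eq:ETD_linear_integral} from the mild formulation \eqref{eq:mild_solution_DS}, decompose the resulting integrand into a pure quadrature error and a nodal perturbation, control the nodal perturbation via Lipschitz continuity and Lemma~\ref{lem:helper_conv}, and close by a discrete Gr\"onwall argument applied to the resulting one-step recurrence.

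First, I would write the error equation
$$\mathcal{E}_h^{n+1} = e^{-A_1\tau}e^{-A_2\tau}\mathcal{E}_h^{n} + \int_{t_n}^{t_{n+1}} e^{-A_1(t_{n+1}-t)}\bigl[\tilde{\Psi}(t)-\mathbf{p}(t)\bigr]\,dt,$$
and split $\tilde{\Psi}(t)-\mathbf{p}(t) = [\tilde{\Psi}(t)-\pi(t)] + [\pi(t)-\mathbf{p}(t)]$, where $\pi(t)$ is the exact linear interpolant of $\tilde{\Psi}$ on $[t_n,t_{n+1}]$ using its true endpoint values $\tilde{\Psi}(t_n)$ and $\tilde{\Psi}(t_{n+1})=\mathbf{f}(t_{n+1},\mathbf{u}_h(t_{n+1}))$. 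The first bracket is the standard linear interpolation error, bounded pointwise by $\tfrac{\tau^2}{8}\max_{I_n}\|\tilde{\Psi}''\|$. The second bracket is a convex combination with weights $1-(t-t_n)/\tau$ and $(t-t_n)/\tau$ of the two nodal residuals
$$\tilde{\Psi}(t_n)-\tilde{\mathbf{f}}^{n} = e^{-\tau A_2}\bigl[\mathbf{f}(t_n,\mathbf{u}_h(t_n))-\mathbf{f}(t_n,U_h^{n})\bigr], \qquad \tilde{\Psi}(t_{n+1})-\mathbf{f}(t_{n+1},W_h^{n+1}).$$
Using the Lipschitz hypothesis together with the contraction estimate \eqref{ineq:exp_estimate}, these are bounded by $L\|\mathcal{E}_h^{n}\|$ and $L\|\mathbf{u}_h(t_{n+1})-W_h^{n+1}\|$, respectively.

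Next, I would invoke Lemma~\ref{lem:helper_conv} to obtain $\|\mathbf{u}_h(t_{n+1})-W_h^{n+1}\| \le (1+L\tau)\|\mathcal{E}_h^{n}\| + \tau^2\max_{I_n}\|\tilde{\Psi}'\|$. Integrating the three contributions over $[t_n,t_{n+1}]$ (each convex-combination weight integrates to $\tau/2$ and the interpolation error integrates to a $\tau^3/8$ contribution using the pointwise max bound), applying $\|e^{-A_1(t_{n+1}-t)}\|\le 1$ and $\|e^{-A_1\tau}e^{-A_2\tau}\|\le 1$, leads to the one-step recurrence
$$\|\mathcal{E}_h^{n+1}\| \le (1+c\tau)\|\mathcal{E}_h^{n}\| + \tau^3\,\max_{[0,T]}\Bigl(L\|\tilde{\Psi}'(t)\| + \tfrac{1}{8}\|\tilde{\Psi}^{(2)}(t)\|\Bigr),$$
with $c$ chosen to absorb the factor $L+\tfrac{L^2\tau}{2}$ appearing after combining Lipschitz with the helper estimate; the choice $c=\max(2L,L^2)$ suffices for $\tau$ moderately small.

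Finally, since $\mathcal{E}_h^{0}=0$, iterating this linear recurrence (or applying the discrete Gr\"onwall inequality, cf.~\cite{Elaydi:2005}) and using the identity $n\tau=t_n$ yields
$$\|\mathcal{E}_h^{n}\| \le t_n\,\tau^2\,e^{c t_n}\,\max_{[0,T]}\Bigl(L\|\tilde{\Psi}'(t)\| + \tfrac{1}{8}\|\tilde{\Psi}^{(2)}(t)\|\Bigr),$$
which, upon absorbing the slack in the one-step constant $(1+c\tau)\le e^{c\tau}$ together with a modest inflation of the exponent to $2c t_n$, delivers the stated bound. The main obstacle is the bookkeeping of constants: carefully tracking how the factor $1+L\tau$ from Lemma~\ref{lem:helper_conv} and the secondary Lipschitz application combine into a clean coefficient of the form $1+c\tau$ with $c=\max(2L,L^2)$, and verifying that the exponentially inflated factor $e^{2ct_n}$ in the statement is indeed generous enough to accommodate all accumulated slack. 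The remainder of the argument follows the standard local-truncation-plus-Gr\"onwall template for exponential Runge--Kutta methods.
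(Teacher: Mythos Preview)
Your proposal is correct and follows essentially the same route as the paper: the same comparison interpolant $\pi=\check{\mathbf{p}}$, the same Lagrange remainder for $\tilde\Psi-\pi$, the same use of Lemma~\ref{lem:helper_conv} and Lipschitz continuity for the nodal perturbation, and the same iteration of a one-step recurrence with $\mathcal{E}_h^0=0$. The only cosmetic difference is that you exploit the convex-combination structure of $\pi-\mathbf{p}$ and integrate the barycentric weights to $\tau/2$ each, whereas the paper simply bounds $\|\check{\mathbf{p}}(t)-\mathbf{p}(t)\|$ by the \emph{sum} of the two nodal residuals; this yields your slightly tighter factor $L+\tfrac{L^2\tau}{2}$ in place of the paper's $2L+L^2\tau$, and hence a recurrence $(1+c\tau)$ rather than $(1+c\tau+c\tau^2)$, which explains why you arrive at $e^{ct_n}$ while the statement carries the looser $e^{2ct_n}$.
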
 
\begin{proof}
	As a comparison function, define 
	$$
	\check{\mathbf{p}}(t) = \tilde{\mathbf{f}}(t_n, \mathbf{u}_h(t_n)) 
	+ \frac{t - t_n}{\tau} \left[\mathbf{f}(t_{n+1}, \mathbf{u}_h(t_{n+1})) - \tilde{\mathbf{f}}(t_{n}, \mathbf{u}_h(t_{n})) \right]. 
	$$ 
	Then, from the integral forms \eqref{eq:mild_solution_DS} and \eqref{eq:ETD_linear_integral}, we can have 
	\begin{align*}
		\E_h^{n+1} &= e^{-A_1 \tau} e^{-A_2 \tau} \E_h^{n}
		\\
		& + \int_{t_n}^{t_{n+1}} e^{-A_1(t_{n+1} - t)}\left[\tilde{\Psi}(t) - \check{\mathbf{p}}(t) + \check{\mathbf{p}}(t) - \mathbf{p}(t)\right] dt
	\end{align*} 
	from which using submultiplicity, triangle inequality, and the exponential estimate \eqref{ineq:exp_estimate}: 
	\begin{align}\label{eq:err_integral_form_ETD2Rk-DS}
		\left\|\E_h^{n+1}\right\| \le  \|\E_h^{n}\|
		+ \int_{t_n}^{t_{n+1}}  \left[\left\|\tilde{\Psi}(t) - \check{\mathbf{p}}(t)\right\| + \|\check{\mathbf{p}}(t) - \mathbf{p}(t)\|\right] dt.
	\end{align}  
	Since $\Psi \in \mathcal{C}^2([0,T])$, then the Lagrange error estimate gives 
	
	\noindent $\left\|\tilde{\Psi}(t) - \check{\mathbf{p}}(t)\right\|~\le~\frac{1}{8} \max\limits_{I_n} \|\tilde{\Psi}^{(2)}(t)\| \tau^2$. 
	
	And for the second term in the integral, we use the estimate in Lemma \ref{lem:helper_conv} and the Lipschitz continuity of \(f\) to reach 
	\begin{align*}
		\|\check{\mathbf{p}}(t) - \mathbf{p}(t)\| &\le L\|\E^n_h\| + L\|\mathbf{u}_h(t_{n+1}) - W_h^{n+1}\| 
		\leq L(2 + L \tau)\|\E^n_h\| + L \max_{I_n}\|\tilde{\Psi}'\|\tau^2 
		\\ 
		&\leq c(1 + \tau) \|\E^n_h\| + L \max_{I_n}\|\tilde{\Psi}'\| \tau^2, \quad c = \max(2L, L^2), 
	\end{align*} 
	Putting the Lagrange error estimate and the above bound in into \eqref{eq:err_integral_form_ETD2Rk-DS} gives 
	\begin{align*}
		\left\|\E_h^{n+1}\right\| &\leq \|\E_h^{n}\|
		+ c\tau(1+\tau)\left\|\E_h^{n}\right\| + \max\limits_{I_n} (L \|\tilde{\Psi}'(t)\| + \frac{1}{8}\|\tilde{\Psi}^{(2)}(t)\|) \tau^3
		\\
		&\le (1 + c \tau + c \tau^2)\left\|\E_h^{n}\right\| + M \tau^3, 
		\qquad M = \max\limits_{[0,T]} (\|L \tilde{\Psi}'(t)\| + \frac{1}{8}\|\tilde{\Psi}^{(2)}(t)\|).
	\end{align*} 
	Enumeration the above recursive relation shows that
	\begin{align*}
		\left\|\E_h^{1}\right\| &\leq (1 + c\tau + c\tau^2)\|\E_h^{0}\| + M \tau^3
		\\ 
		\left\|\E_h^{2}\right\| &\leq (1 + c\tau + c\tau^2)\|\E_h^{1}\| + M\tau^3
		\leq (1 + c\tau + c\tau^2)^2 \|\E_h^0\| +  (1 + c \tau + c \tau^2) M\tau^3 + M\tau^3 .
	\end{align*} 
	Continuing this way  and noting \(\E^0_h = \mathbf{0}\) 
	\begin{align*}
		\left\|\E_h^{n}\right\| \le M \tau^3\sum_{\ell = 0}^{n-1} (1 + c \tau + c \tau^2)^{\ell}  %\le M \tau^3 n(1 + c \tau + c \tau^2)^{n-1}
	\end{align*} 
	Using the inequalities \(\sum\limits_{\ell = 0}^{n-1} (1 + \alpha)^\ell \le n(1 + \alpha)^{n-1}, \,\,\, \alpha \ge 0\), and $(1 + x)^n \le e^{nx}, \,\, x \ge 0$, the following estimates can be obtained 
	\begin{align*}
		\left\|\E_h^{n}\right\| \le M\tau^3 n (1 + c\tau + c\tau^2)^{n-1} \leq M t_n e^{cn(\tau + \tau^2)} \tau^2. 
	\end{align*}
	The proof is complete by noting that \(e^{c n(\tau + \tau^2)} \le e^{2 c n\tau} = e^{2 c t_n} \). The last inequality assumes \(\tau\) small (suffices that \(\tau \le 1\)).
\end{proof}
\noindent This establishes the second-order temporal convergence of the underlying ETD2RK--DS scheme. 

\noindent Next, result gives an error estimate for the Pad\'e based implementation of the dimension split ETD scheme. 
\begin{theorem}\label{thm:convergence ETD dimension split rational approx}
	Let $\widehat{U}^n_h$ be the dimension-split Pad\'e-based ETD2RK-DS scheme defined by \eqref{eq:ETD_linear_fully_discrete_Rational_2D} (or  \eqref{eq:ETD_linear_fully_discrete_Rational_3D}), and let $\mathbf{u}_h$ be the solution of \eqref{eq:semidiscrete}. 
	Suppose the hypotheses of Theorems \ref{thm:stability_ETD_dimension_split} and \ref{thm:convergence ETD dimension split} are satisfied.
	Then, for $n = 0, 1, \dots, N$ 
	$$
	||\mathbf{u}_h(t_{n}) - \widehat{U}_h^{n}|| \leq  t_n e^{c_1 t_n} \max_{[0, T]} (\|\tilde{\Psi}'(t)\| + \frac{1}{8}\|\tilde{\Psi}^{(2)}(t)\|)\tau^2 + \tilde{c} e^{\check{c}t_n} \tau .
	$$
\end{theorem}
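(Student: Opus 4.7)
The plan is to invoke the triangle inequality
\[
\|\mathbf{u}_h(t_n) - \widehat{U}_h^n\| \le \|\mathbf{u}_h(t_n) - U_h^n\| + \|U_h^n - \widehat{U}_h^n\|,
\]
handle the first term via Theorem~\ref{thm:convergence ETD dimension split} (which directly delivers the $t_n e^{c_1 t_n}\max(\ldots)\tau^2$ piece of the stated bound, after absorbing the Lipschitz constant into $c_1$), and control the second term by developing a one-step recursion and applying the discrete Gr\"onwall inequality. Setting $E_n := \|U_h^n - \widehat{U}_h^n\|$, the goal is to show
\[
E_{n+1} \le C\,\tau^2 + (1 + \tilde c\,\tau)\,E_n, \qquad E_0 = 0,
\]
which yields $E_n \le \tilde c\, e^{\check c\, t_n}\tau$ via the same Gr\"onwall-type unrolling used in the proof of Theorem~\ref{thm:stability_ETD_dimension_split}.

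To produce this recursion, I would subtract \eqref{eq:ETD_linear_fully_discrete} from \eqref{eq:ETD_linear_fully_discrete_Rational_2D} and decompose
\[
U_h^{n+1} - \widehat{U}_h^{n+1} = \bigl(W_h^{n+1} - \widehat{W}_h^{n+1}\bigr) + \bigl[\mathcal{T}_e - \mathcal{T}_P\bigr]\mathcal{G}_e + \mathcal{T}_P\bigl[\mathcal{G}_e - \mathcal{G}_P\bigr],
\]
where $\mathcal{T}_e := (A_1^{-1}\tau - A_1^{-2} + A_1^{-2} e^{-A_1\tau})/\tau$, $\mathcal{T}_P := \tau Q_3(\tau A_1)$, and $\mathcal{G}_e$, $\mathcal{G}_P$ denote the exact and Pad\'e source combinations appearing in the two schemes. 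Lemma~\ref{lem:helper_conv_Pade} handles the first summand. For the second, a short algebraic computation shows $\mathcal{T}_e - \mathcal{T}_P = A_1^{-2}(e^{-A_1\tau} - P_{0,2}(\tau A_1))/\tau$, so Proposition~\ref{prop:Pade-properties}(a) gives $\|\mathcal{T}_e - \mathcal{T}_P\| = O(\tau^2)$; combined with the uniform bound on $\|\mathcal{G}_e\|$ delivered by Theorem~\ref{thm:stability_ETD_dimension_split} and Assumption~\ref{Assump:boundedness_Lipschitz}, this contributes an $O(\tau^2)$ local term. For the third, $\|\mathcal{T}_P\|$ is $O(\tau)$ from the spectral boundedness of $Q_3$, and the expansion
\[
\mathcal{G}_e - \mathcal{G}_P = \bigl[\mathbf{f}(t_{n+1}, W_h^{n+1}) - \mathbf{f}(t_{n+1}, \widehat{W}_h^{n+1})\bigr] + \bigl[Q_1(\tau A_2) - e^{-\tau A_2}\bigr]\mathbf{f}(t_n, U_h^n) + Q_1(\tau A_2)\bigl[\mathbf{f}(t_n, \widehat{U}_h^n) - \mathbf{f}(t_n, U_h^n)\bigr]
\]
is controlled termwise by the Lipschitz hypothesis, Proposition~\ref{prop:Pade-properties}(a), and Proposition~\ref{prop:Pade-properties}(b), producing a bound of the form $L\|W_h^{n+1} - \widehat{W}_h^{n+1}\| + O(\tau^3) + (1+O(\tau^3))\,L\,E_n$. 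Substituting Lemma~\ref{lem:helper_conv_Pade} and collecting terms yields the claimed recursion.

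The principal obstacle is keeping careful track of the order of each Pad\'e-generated term. In particular, one must not conflate the pointwise Pad\'e estimate $\|e^{-\tau A_1} - P_{0,2}(\tau A_1)\| = O(\tau^3)$ with its effect on the ETD2RK coefficient, where the division by $\tau$ in the formula for $U_h^{n+1}$ degrades the contribution to $O(\tau^2)$ locally, which in turn produces the suboptimal $O(\tau)$ global term from the rational approximation. The three-dimensional case \eqref{eq:ETD_linear_fully_discrete_Rational_3D} follows from essentially the same argument after replacing $Q_1(\tau A_2)$ with $Q_1(\tau A_2^{(y)}) Q_1(\tau A_2^{(z)})$ and invoking a commuting-factor variant of Proposition~\ref{prop:Pade-properties}(c) for the compound operator; no new analytic ingredients are required.
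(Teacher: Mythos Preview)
Your proposal is correct and follows essentially the same route as the paper: split via the triangle inequality, invoke Theorem~\ref{thm:convergence ETD dimension split} for $\|\mathbf{u}_h(t_n)-U_h^n\|$, and for $\|U_h^n-\widehat{U}_h^n\|$ derive a one-step recursion using Lemma~\ref{lem:helper_conv_Pade} together with the Pad\'e estimates in Proposition~\ref{prop:Pade-properties}, then unroll. The paper telescopes at a slightly different intermediate point (its $\mathbb{T}_1,\mathbb{T}_2,\mathbb{T}_3$ separate the $A_1$-defect, the $A_2$-defect, and the Lipschitz difference, whereas your $[\mathcal{T}_e-\mathcal{T}_P]\mathcal{G}_e+\mathcal{T}_P[\mathcal{G}_e-\mathcal{G}_P]$ groups the $A_2$-defect with the Lipschitz terms), but the ingredients, the order of each contribution, and the final $O(\tau)$ global bound are identical.
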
 

\begin{proof}
	By triangle inequality, 
	\begin{equation*}
		\|\widehat{\E}^{n+1}_h\| = \|\mathbf{u}(t_{n+1}) - \widehat{U}_h^{n+1}\| \leq \|\mathbf{u}(t_{n+1}) - U_h^{n+1}\| + \|U_h^{n+1} - \widehat{U}_h^{n+1}\|. 
	\end{equation*} 
	From the estimate in Theorem \ref{thm:convergence ETD dimension split} 
	
	\noindent \(\|\mathbf{u}(t_{n+1}) - U_h^{n+1}\|~\le~t_{n+1} e^{c_1 t_{n+1}}~\max\limits_{[0, T]} (\|\tilde{\Psi}'(t)\|~+~\frac{1}{8}\|\tilde{\Psi}^{(2)}(t)\|)\tau^2\), 
	\begin{equation}\label{eq:err_decompose}
		\|\widehat{\E}^{n+1}_h\| \leq  t_{n+1} e^{c_1 t_{n+1}} \max_{[0, T]} \left(\|\tilde{\Psi}'(t)\| + \frac{1}{8}\|\tilde{\Psi}^{(2)}(t)\|\right)\tau^2 + \|U_h^{n+1} - \widehat{U}_h^{n+1}\|. 
	\end{equation} 
	For the second term in the right hand side above, consider $\widehat{U}_h^n$ in the form 
	\begin{align*}
		\widehat{U}_h^{n+1} &= \widehat{W}_h^{n+1} 
		\nonumber \\ 
		&+ \left( A_1^{-1}\tau -  A_1^{-2} + A_1^{-2} P_{0,2}(A_1\tau)\right) 
		\frac{\mathbf{f}(t_{n+1},\widehat{W}_h^{n+1}) - P_{0,2}(A_2\tau) \mathbf{f}(t_n,\widehat{U}_h^n)}{\tau},  
	\end{align*} 
	Then, the difference \(U^{n+1}_h - \widehat{U}_h^{n+1}\) can be written as 
	\begin{align}\label{eq:PadeTerm_Error}
		U^{n+1}_h - \widehat{U}_h^{n+1} &= W_h^{n+1} - \widehat{W}_h^{n+1} 
		\nonumber \\ 
		&+ \left( A_1^{-1}\tau -  A_1^{-2} + A_1^{-2} e^{-A_1\tau}\right) 
		\frac{\mathbf{f}(t_{n+1}, W_h^{n+1}) - e^{-A_2\tau} \mathbf{f}(t_n,U_h^n)}{\tau}
		\nonumber \\ 
		&- \left( A_1^{-1}\tau -  A_1^{-2} + A_1^{-2} P_{0,2}(A_1\tau)\right) 
		\frac{\mathbf{f}(t_{n+1},W_h^{n+1}) - P_{0,2}(A_2\tau) \mathbf{f}(t_n,U_h^n)}{\tau}
		\nonumber \\ 
		&+ \left( A_1^{-1}\tau -  A_1^{-2} + A_1^{-2} P_{0,2}(A_1\tau)\right) 
		\frac{\mathbf{f}(t_{n+1},W_h^{n+1}) - P_{0,2}(A_2\tau) \mathbf{f}(t_n,U_h^n)}{\tau}
		\nonumber \\ 
		&- \left( A_1^{-1}\tau -  A_1^{-2} + A_1^{-2} P_{0,2}(A_1\tau)\right) 
		\frac{\mathbf{f}(t_{n+1},\widehat{W}_h^{n+1}) - P_{0,2}(A_2\tau) \mathbf{f}(t_n,\widehat{U}_h^n)}{\tau}  
		\nonumber \\ 
		&= W_h^{n+1} - \widehat{W}_h^{n+1} + \mathbb{T}_1 + \mathbb{T}_2 + \mathbb{T}_3,
	\end{align}  
	where 
	\[
	\mathbb{T}_1 = \frac{1}{\tau}A_1^{-2}\left(e^{-A_1 \tau} - P_{0,2}(A_1 \tau)\right) f(t_{n+1}, W_h^{n+1}), 
	\]
	\begin{align*}
		\mathbb{T}_2 &=\frac{1}{\tau} \bigg(\left(A_1^{-1} \tau - A_1^{-2} + A_1^{-2} P_{0,2}(A_1 \tau) \right)P_{0,2}(A_2 \tau) - \left(A_1^{-1} \tau - A_1^{-2} + A_1^{-2} e^{-A_1 \tau} \right)e^{-A_2 \tau}\bigg) \mathbf{f}(t_n, U_h^n)   
		\\ 
		&= \frac{1}{\tau} \bigg(A_1^{-2}\left(A_1 \tau - I + P_{0,2}(A_1 \tau) \right)(P_{0,2}(A_2 \tau) - e^{-A_2 \tau}) + A_1^{-2}(P_{0,2}(A_1 \tau) - e^{-A_1 \tau})e^{-A_2 \tau}\bigg) \mathbf{f}(t_n, U_h^n)
	\end{align*} 
	and  
	\begin{align*}
		\mathbb{T}_3 &= \frac{1}{\tau}\left(A_1^{-1}\tau - A_1^{-2}(I - P_{0,2}(A_1 \tau))\right) \bigg[\left(\mathbf{f}(t_{n+1}, W_h^{n+1}) - \mathbf{f}(t_{n+1}, \widehat{W}_h^{n+1})\right) 
		\\
		&+ P_{0,2}(A_2 \tau)\left(\mathbf{f}(t_n, \widehat{U}_h^n) - \mathbf{f}(t_n, U_h^n)\right)\bigg]. 
	\end{align*} 
	Since the hypothesis of this theorem assumes the stability result holds, then there exist \(M_1, M_2 > 0\) such that \(\|W^{n}\| \le M_1\) and \(\|U^{n}\| \le M_2\) for each \(n\). Consequently, \(\|f(t_n, W^n_h)\| \le C_b(M_1)\) and \(\|f(t_n, U^n_h)\|\le C_b(M_2)\). 
	
	Without loosing generality, assume $\tau \le 1$. Now, applying the second estimate in Proposition \ref{prop:Pade-properties}(1):
	\[
	\|\mathbb{T}_1 \| \le \frac{1}{\tau} \widetilde{C}_{A_1} \tau^3 C_b(M_1) = \widetilde{C}_{A_1} C_b(M_1) \tau^2  
	\] 
	Similarly, by the stability assumption together with parts 1 and 4 of Proposition \ref{prop:Pade-properties} 
	\[
	\|\mathbb{T}_2 \| \le \frac{1}{\tau}\left( \widecheck{C}_{A_1} \tau^2 C_{A_2} \tau^3 + \widetilde{C}_{A_1}\tau^3 \right) C_b(M_2) \le (\widecheck{C}_{A_1} C_{A_2} + \widetilde{C}_{A_1}) C_b(M_2) \tau^2. 
	\] 
	Finally, for \(\mathbb{T}_3\), we apply the Lipschitz continuity in Assumption \ref{Assump:boundedness_Lipschitz} together with parts 2 and 4 of Proposition 
	\ref{prop:Pade-properties}, and the estimate in Lemma \ref{lem:helper_conv_Pade} to obtain 
	\begin{align*}
		\|\mathbb{T}_3\| &\le \widecheck{C}_{A_1} \tau [\|W_h^{n+1} - \widehat{W}_h^{n+1}\| + (1 + C_{A_2}\tau^3) \|U_h^n - \widehat{U}_h^n\|] 
		\\ 
		& \le \widecheck{C}_{A_1} \tau [L(C_w\tau^3 + (1+c_w \tau)\|U_h^{n} - \widehat{U}_h^{n}\|) + (1 + C_{A_2}\tau^3) L \|U_h^n - \widehat{U}_h^n\|]
		\\ 
		&\le  \widecheck{C}_{A_1} L C_w\tau^4 + \widecheck{C}_{A_1} L(2 + c_w  + C_{A_2})\tau\|U_h^{n} - \widehat{U}_h^{n}\| . 
	\end{align*}
	Now, using the above estimates on \(\|\mathbb{T}_1 \|, \|\mathbb{T}_2 \|\), and \(\|\mathbb{T}_3\|\) together with Lemma \ref{lem:helper_conv_Pade} in \eqref{eq:PadeTerm_Error}: 
	\begin{align*}
		\|U^{n+1}_h - \widehat{U}^{n+1}_h\| &\le \tilde{c} \tau^2 + \check{c}\tau\|U^{n}_h - \widehat{U}^{n}_h\|, 
	\end{align*} 
	where 
	\begin{align*}
		\tilde{c} &= \widetilde{C}_{A_1} C_b(M_1) +  (\widecheck{C}_{A_1} C_{A_2} + \widetilde{C}_{A_1}) C_b(M_2) + \widecheck{C}_{A_1} L C_w, 
		\quad \check{c} = \widecheck{C}_{A_1} L(2 + c_w  + C_{A_2}). 
	\end{align*} 
	Proceeding recursively, we have  
	\[ 
	\|U^{n}_h - \widehat{U}^{n}_h\| \le (\check{c}\tau)^n \|U^{0}_h - \widehat{U}^{0}_h\| + \tilde{c}\tau^2 \sum_{\ell = 0}^{n-1} (\check{c}\tau)^{\ell} .
	\] 
	Noting \(U_h^0 = \widehat{U}^0_h = \mathbf{u}_h(0)\), the following estimate can be obtained: 
	\begin{equation}\label{eq:Pade_Err_Contribution}
		\|U^{n}_h - \widehat{U}^{n}_h\| \le \tilde{c}\tau^2 \sum_{\ell = 0}^{n-1} (\check{c}\tau)^{\ell} \le \tilde{c}\tau^2 \sum_{\ell = 0}^{n-1} (1 + \check{c}\tau)^{\ell} 
		\le  \tilde{c}\tau^2 n (1 + \check{c}\tau)^{n-1} \le  \tilde{c} t_n e^{\check{c}t_n} \tau.
	\end{equation} 
	We have used the inequalities \(\sum\limits_{\ell = 0}^{n-1} (1 + \alpha)^\ell \le n(1 + \alpha)^{n-1}, \,\,\, \alpha \ge 0\), and $(1 + x)^n \le e^{nx}, \,\, x \ge 0$, to achieve the above estimate. Putting \eqref{eq:Pade_Err_Contribution} in \eqref{eq:err_decompose} completes the proof. 
\end{proof} 

\begin{remark}
	The error bound in Theorem~\ref{thm:convergence ETD dimension split rational approx}
	contains an $\mathcal{O}(\tau)$ contribution arising from the Pad\'e approximation of
	the exponential. This term is based on a uniform worst--case estimate of the Pad\'e defect
	and may not be sharp. In particular, the experiments in Section~\ref{Sec:Numerics} 
	indicate that an overall temporal accuracy of order $\mathcal{O}(\tau^{2})$ in the test cases. 
\end{remark}

\section{Numerical Experiment}\label{Sec:Numerics}
In this section, three numerical examples are presented to verify the theoretical
results and demonstrate the practical performance of the proposed methods.
In particular, we examine both the convergence and stability properties of
the dimension-splitting ETD2RK-DS scheme, and we compare the efficiency of
its Sylvester–equation implementation with a conventional LU–factorization
approach, especially for large-scale problems.

The first example is a nonautonomous Allen–Cahn type equation with a
manufactured exact solution, used to quantify accuracy and verify the
second-order convergence rate.
The second example involves a locally bounded source that violates the linear
growth assumption in
Theorem~\ref{thm:stability_ETD_dimension_split}, thereby showing that this
condition is sufficient but not strictly necessary for stability.
Finally, we consider the FitzHugh–Nagumo model to illustrate the performance
of the proposed scheme and its implementation strategy for systems of
reaction–diffusion equations.

To measure the temporal accuracy, we evaluate the numerical error on a fixed
coarse time grid.  Let
\(
\mathcal{S}_N = \{t_n = nT/N : n = 0,1,\dots,N\},
\)
and fix a coarse set $\mathcal{S}_{16}$ in two dimensions and $\mathcal{S}_{10}$
in three dimensions.
Since $\mathcal{S}_{16} \subseteq \mathcal{S}_N$ whenever $N$ is a multiple of $16$
(and analogously $\mathcal{S}_{10} \subseteq \mathcal{S}_N$ whenever $N$ is a
multiple of $10$), we compute the $\ell^\infty$ error at the common time points.
Specifically,
\[ 
E(N) = \max\limits_{t_n \in S_{N} \cap S_{16}} \|u(x_i, y_j, t_n) - U^n_{h,ij}\|, \text{ and } 
E(N) = \max\limits_{t_n \in S_{N} \cap S_{10}} \|u(x_i, y_j, z_k, t_n) - U^n_{h,ijk}\|
\] for the two and three dimensional cases, respectively. This ensures that all $E(N)$ values are computed at identical physical times.
The experimental order of convergence (EOC) is then defined as  \(\log_2\left(\frac{E(N)}{E(2N)}\right)\).  

Further, we assess the performance of our ETD2RK-DS scheme as well as the implementation algorithm developed here relative to the Pade (\(P_{0,2}\)) based ETD2RK scheme without dimension splitting, given by  
\begin{equation*}
	\widehat{W}_h^{n+1} 
	= Q_1(\tau A_h) \widehat{U}_h^n + \tau Q_2(\tau A_h) \mathbf{f}(t_n, \widehat{U}_h^n),
\end{equation*}
and
\begin{equation*}
	\widehat{U}_h^{n+1}  = \widehat{W}_h^{n+1}  + \tau Q_3(\tau A_h) \left[ \mathbf{f}(t_{n+1}, \widehat{W}_h^{n+1}) - \mathbf{f}(t_n, \widehat{U}_h^n) \right],
\end{equation*} 
This nonsplit scheme provides a natural baseline for quantifying both
the computational efficiency and the accuracy of the proposed
dimension-splitting formulation. 

We also examine the sensitivity of the linear solvers (LU and the Sylvester reformulation) to the degree of the 
Pad\'e approximant used in the ETD2RK-DS scheme. In addition to the $(0,2)$ Pad\'e formula, we consider the 
higher-order approximant
\[
P_{0,4}(x) = 24\,(x^{4} + 4x^{3} + 12x^{2} + 24x + 24I)^{-1},
\]
which leads to the Pad\'e-based ETD2RK-DS method
\begin{equation}
	\widehat{W}_h^{\,n+1}
	= R_1(\tau A_1)R_1(\tau A_2)\widehat{U}_h^n
	+ \tau\,R_2(\tau A_1)R_1(\tau A_2)\mathbf{f}(t_n,\widehat{U}_h^n),
\end{equation}
\begin{equation}
	\widehat{U}_h^{\,n+1}
	= \widehat{W}_h^{\,n+1}
	+ \tau\,R_3(\tau A_1)\!\left[
	\mathbf{f}(t_{n+1},\widehat{W}_h^{\,n+1})
	- R_1(\tau A_2)\mathbf{f}(t_n,\widehat{U}_h^n)
	\right],
\end{equation}
where
\[
R_1(x)=P_{0,4}(x), \qquad
R_2(x)=(x^3 + 4x^2 + 12x + I)(x^{4} + 4x^{3} + 12x^{2} + 24x + 24I)^{-1},
\]
\[
R_3(x)=(x^3 + 3x^2 + 8x + 12I)(x^{4} + 4x^{3} + 12x^{2} + 24x + 24I)^{-1}.
\]
These rational functions admit the partial-fraction representation
\[
R_\ell(x)
= 2\,\mathrm{Re}\!\left(
\sum_{l=1}^{2} r^{(\ell)}_l (x - s_l I)^{-1}
\right),
\qquad \ell=1,2,3.
\]
This comparison allows us to assess how the computational efficiency of the LU and Sylvester implementations depends on 
the Pad\'e degree. Higher-order ETD schemes such as ETD3RK and ETD4RK typically require Pad\'e approximants of higher 
degree to maintain their design order, and the results presented here illustrate which linear-solver strategy scales more 
favorably as the Pad\'e order increases.

\text{Computational Environment:} All numerical experiments were implemented in Python on a workstation equipped 
with a 4th‑Generation Intel\textregistered\ Xeon\textregistered\ w5-3425 processor (12 cores, 3.20~GHz) and 32~GB of 
RAM. As noted in Remark~\ref{rem:algorithms}, nested loops were avoided by implementing the spectral solver (as well 
as the LU-based reference implementation) in matrix--matrix form, enabling efficient use of optimized BLAS routines.

\begin{example}[Multidimensional nonautonomous Allen-Cahn type equations] \label{exp:Allen-Cahn}  
	
	For testing purposes, consider the model problem on the unit domains $\Omega=(0,1)^d$, $d=2,3$, with final time \(T = 1\) and 
	initial data 
	\[
	u_0(x,y)=\sin(\pi x)\sin(\pi y) \qquad u_0(x,y,z)=\sin(\pi x)\sin(\pi y)\sin(\pi z). 
	\]
	In addition, we take \(q = 0\) and choose the nonlinear source term  
	as 
	\[ 
	f(\mathbf{x},t,u) = u(1 - u^2) + \psi(\mathbf{x}, t),  
	\] 
	where the forcing term $\psi(\mathbf{x}, t)$ is chosen so that the true solution
	of the problem is $u(\mathbf{x}, t) = e^{-\lambda t}u_0(\mathbf{x})$. 
	As such, in two dimension we can obtain 
	$$
	\psi(x,y,t) = (-\lambda  u_0(x,y) + 2\pi^2 u_0(x,y)) e^{-\lambda t} 
	-  e^{-\lambda t} u_0(x,y)(1 - (e^{-\lambda t} u_0(x,y))^2). 
	$$ 
	An analogous expression is obtained in the three dimensions.
\end{example} 

Taking $\lambda = 1$, numerical results obtained by solving this problem by the Pad\'e based dimension split schemes with 
$m_x = m_y = 512$ and $m_x = m_y = m_z = 80$ are presented in Tables \ref{tab:Allen-Cahn2D} and 
\ref{tab:Allen-Cahn3D}, respectively. It is observed that an optimal $\mathcal{O}(\tau^2)$ temporal convergence 
rate is attained in all cases. This suggests that the pessimistic $\mathcal{O}(\tau)$ term in 
the error estimate in Theorem \ref{thm:convergence ETD dimension split rational approx} does not dominantly influence the 
observed numerical results. These observations suggest that, under sufficient regularity of the initial
data, the solution, and the source term, the Pad\'e-based dimension-splitting
scheme can be expected to behave as a second-order method in practice, despite the presence of
the $\mathcal{O}(\tau)$ defect term in the theoretical error estimate. 

Plots of the time evolution of the pointwise relative error \(\frac{\|u(t_n) - U^n_h\|}{\|u(t_n)\|}\) are 
given in Figure \ref{fig:Allen-Cahn_Relative_Error} for different number of time mesh elements \(N\). 
It can be observed that the error levels are reduced by approximately a factor of four when $N$ is doubled, 
which also reflects a numerical second-order temporal accuracy, observed for the absolute error.

In addition, the numerical outputs for the LU and Sylvester-based implementation approaches are presented for 
verification purposes. The identical values observed from the two implementation techniques demonstrate that the spectral decomposition 
approach does not introduce numerical instabilities to the scheme. Also, the CPU timings show that the Sylvester-based 
implementation becomes notably faster than the LU approach as the number of mesh points (\(N\)) increases.%, particularly in two dimensions. 

Further, when the Pad\'e \(P_{0,4}\) is employed the number of linear systems to be solved formally doubles. 
However,  Table~\ref{tab:Allen-Cahn2D} shows that while the runtime for LU  approximately doubles, the 
Sylvester-based runtime increases only mildly. 
This is because, in the Sylvester-based approach, the number of dominant
computational tasks involving matrix--vector multiplications remains unchanged,
with only inexpensive elementwise divisions added. In contrast, the LU-based
implementation requires solving twice as many linear systems. 
This highlights that the Sylvester-based implementation approach developed 
here is computationally robust to the type Pad\'e approximant used. 
These observations indicate that the Sylvester-based 
implementation may be particularly attractive for higher-order ETD schemes, such as ETD3RK and ETD4RK, which often require higher-degree Pad\'e 
approximants to attain their design accuracy.  

Finally, Table \ref{tab:AC_ETD2RK_VS_ETD2RK-DS_CPU} compares the dimension-split ETD2RK-DS scheme with the non-split ETD2RK method. 
It is seen that the error values \(E(N)\) are quite different. This can be expected as the non-split scheme is constructed based on polynomial 
approximations of the source function \(\mathbf{f}\) only, whereas the splitting scheme involves the approximation of the product  
(\(e^{-A_1(t_{n+1} - t)} \mathbf{f}\)). In terms of efficiency, however, the advantage is decisive: the dimension-split scheme achieves 
a $10^{-4}$ accuracy within $0.22$~s, whereas the nonsplit solver requires over $27$~s to reach a similar error.
This demonstrates the substantial computational gain afforded by the dimension-splitting. 

\begin{table}[]
	\begin{tabular}{clllllll}
		\multicolumn{1}{l}{}         &     & \multicolumn{3}{l}{Sylvester} & \multicolumn{3}{l}{LU}  \\ 
		\cmidrule(lr){3-5} \cmidrule(lr){6-8}
		\multicolumn{1}{l}{}         & $N$ & $E(N)$     & EOC    & CPU    & $E(N)$   & EOC  & CPU   \\ 
		\thickhline
		\multirow{5}{*}{\(P_{0,2}\)} & 16  & 5.05E-02   & 1.72   & 0.81   & 5.05E-02 & 1.72 & 1.14  \\
		& 32  & 1.53E-02   & 1.88   & 1.66   & 1.53E-02 & 1.88 & 2.27  \\
		& 64  & 4.15E-03   & 1.94   & 3.43   & 4.15E-03 & 1.94 & 4.53  \\
		& 128 & 1.08E-03   & 1.97   & 6.85   & 1.08E-03 & 1.97 & 9.00  \\
		& 256 & 2.76E-04   &        & 13.82  & 2.76E-04 &      & 18.02 \\
		\thickhline
		\multirow{5}{*}{\(P_{0,4}\)} & 16  & 2.30E-02   & 1.93   & 1.03   & 2.30E-02 & 1.93 & 2.04  \\
		& 32  & 6.06E-03   & 1.99   & 2.23   & 6.06E-03 & 1.99 & 4.03  \\
		& 64  & 1.53E-03   & 2.00   & 4.44   & 1.53E-03 & 2.00 & 8.05  \\
		& 128 & 3.83E-04   & 1.98   & 8.92   & 3.83E-04 & 1.98 & 17.33 \\
		& 256 & 9.73E-05   &        & 19.52  & 9.73E-05 &      & 36.49
	\end{tabular} 
	\caption{Example \ref{exp:Allen-Cahn}: temporal convergence and CPU-time comparison for 
		the LU-factorization and Sylvester-equation implementations of the
		Pad\'e--ETD2RK-DS scheme for the two-dimensional Allen--Cahn equation.}\label{tab:Allen-Cahn2D}
\end{table} 
\begin{table}[] 
	\centering
	\begin{tabular}{l|lll|lll}
		& \multicolumn{3}{c}{Sylvester} & \multicolumn{3}{c}{LU}  \\  
		\cmidrule(lr){2-4} \cmidrule(lr){5-7} 
		$N$ & $E(N)$     & EOC    & CPU    & $E(N)$   & EOC  & CPU   \\
		\thickhline
		10  & 2.96E-01   & 1.51   & 1.72   & 2.96E-01 & 1.51 & 1.84  \\
		20  & 1.04E-01   & 1.78   & 3.45   & 1.04E-01 & 1.78 & 3.70  \\
		40  & 3.01E-02   & 1.90   & 6.70   & 3.01E-02 & 1.90 & 7.36  \\
		80  & 8.05E-03   & 1.91   & 13.16  & 8.05E-03 & 1.91 & 15.24 \\
		160 & 2.14E-03   &        & 26.36  & 2.14E-03 &      & 32.49
	\end{tabular} 
	\caption{Example \ref{exp:Allen-Cahn} temporal convergence and CPU-time comparison for the
		Sylvester and LU-factorization implementations of the
		ETD2RK-DS scheme for the three-dimensional Allen--Cahn equation.}\label{tab:Allen-Cahn3D}
\end{table} 
\begin{table}[]
	\begin{tabular}{lllll}
		& \multicolumn{2}{c}{ETD2RK-DS (LU)} & \multicolumn{2}{c}{ETD2RK (LU)} \\ 
		\cmidrule(lr){2-3} \cmidrule(lr){4-5} 
		$N$ & $E(N)$             & CPU           & $E(N)$           & CPU          \\
		16  & 2.05E-02           & 0.04          & 5.82E-04         & 27.52        \\
		32  & 5.75E-03           & 0.07          & 1.90E-04         & 29.11        \\
		64  & 1.52E-03           & 0.11          & 7.42E-05         & 36.23        \\
		128 & 4.16E-04           & 0.22          & 4.76E-05         & 53.61       
	\end{tabular} 
	\caption{Example \ref{exp:Allen-Cahn}: CPU-time comparison between the dimension-splitting
		ETD2RK-DS scheme and the nonsplit Pad\'e--ETD2RK scheme in solving the two-dimension problem. 
	}\label{tab:AC_ETD2RK_VS_ETD2RK-DS_CPU}
\end{table} 

\begin{figure}
	\centering
	\includegraphics[width=0.49\textwidth]{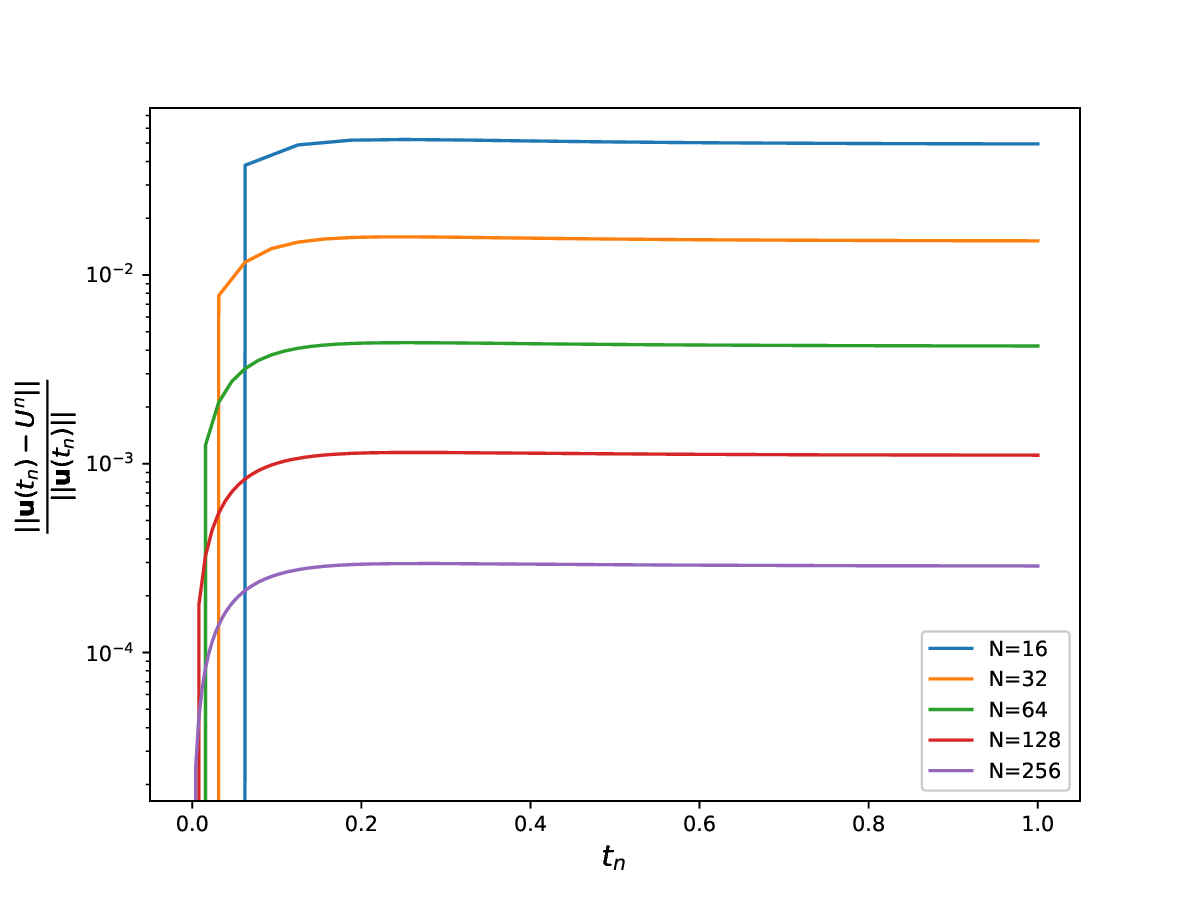} 
	\includegraphics[width=0.49\textwidth]{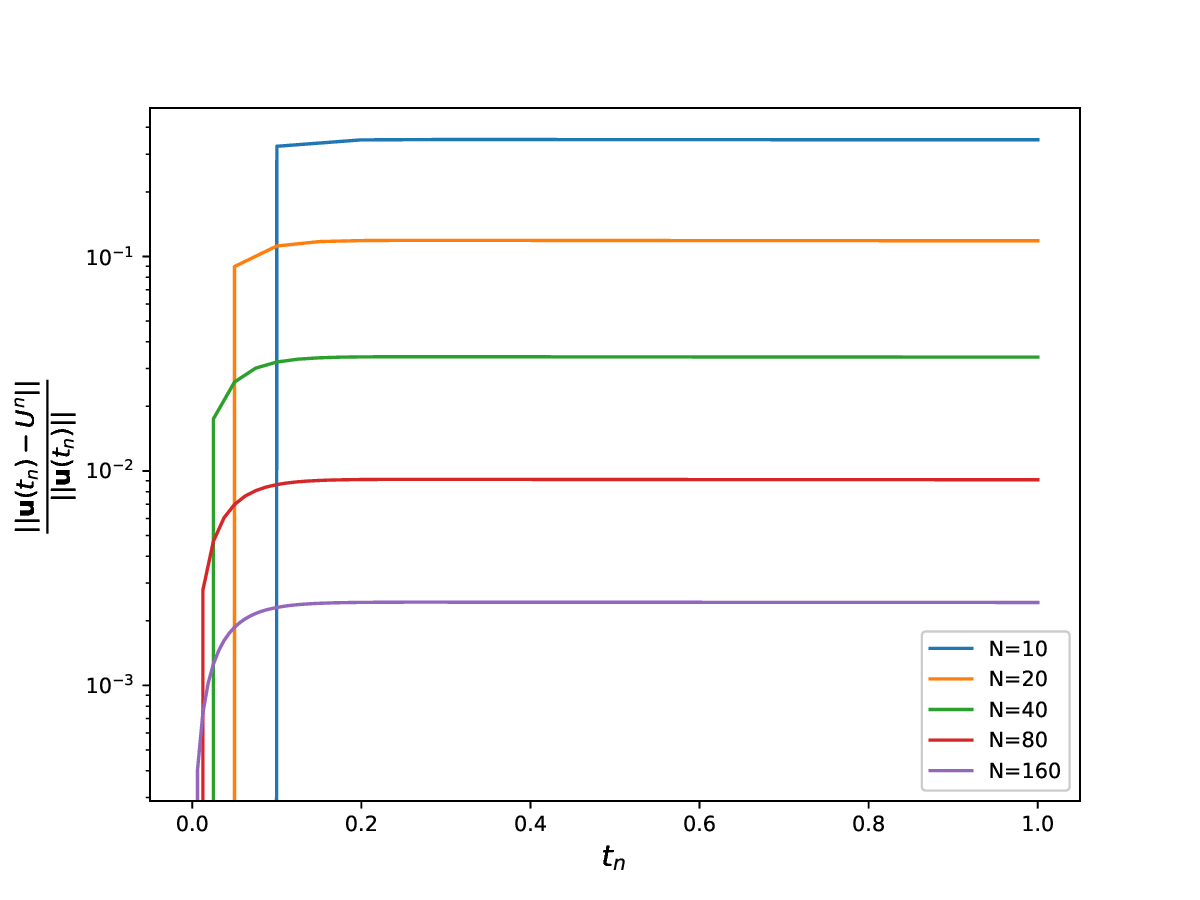}
	\caption{Example \ref{exp:Allen-Cahn}, comparison of the pointwise relative approximation error, 
		for different values of $N$, 2D (left), 3D (right)}
	\label{fig:Allen-Cahn_Relative_Error}
\end{figure}

\begin{example}\label{exp:singular}(Non-globally bounded source) 
	Consider the model problem \eqref{eq:model} with as source term 
	\[
	f(x,y,t,u)=f(u)=\rho\,\frac{u}{1-u},\qquad \rho>0,
	\]   
	and set $q=1$. In addition, we take the initial data $u_0(x,y) = 0.99 \sin(\pi x) \sin(\pi y)$ and 
	$u_0(x,y,z) = 0.99 \sin(\pi x) \sin(\pi y) \sin(\pi z)$  in the two and three dimensions, respectively. 
	The scaling coefficient 
	$0.99$ is chosen to ensure that $ 0 < u_0 < 1$, on $\Omega = (0,1)^d$, \(d = 2, 3\).
\end{example} 
It can be observed that if $0 \le u \le 1 -\varepsilon$ on $\Omega \times (0, T]$, for some $\varepsilon > 0$, then 
\[ 
|f(u)| = \rho \frac{u}{1-u} \le \frac{\rho}{\varepsilon}(1 - \varepsilon) =: c_1(\varepsilon) (1 - \varepsilon), 
\] 
as such the constant \(c_1\) depends on the bound of \(u\). Consequently, the bound on $f$ is not globally uniform.

For testing purposes, we take $\rho = 0.1$ and use spatial grids with $m_x = m_y = 512$ in two dimensions and $m_x = m_y = m_z = 80$ in three
dimensions. Since the exact solution is not available for this problem, the errors $E(N)$ are computed using a reference solution in place of
$u(\mathbf{x},t)$. Specifically, the reference solution is obtained on a fine time grid with $N = 512$ in two dimensions and $N = 320$ in 
three dimensions. Tables~\ref{tab:Unbounded2D} and~\ref{tab:Unbounded3D} report the errors, convergence rates, and CPU times for the 
Pad\'e--ETD2RK-DS scheme in two and three dimensions, with \(T = 1\).  As before, both the LU and Sylvester (spectral) implementations 
yield identical errors, confirming numerical consistency. The optimal $\mathcal{O}(\tau^2)$ temporal convergence rate is again attained, 
demonstrating that the scheme remains stable and accurate even without the global linear-growth assumption.  

Additionally, Figure~\ref{fig:singular_source_prof_2D} displays the computed solution profiles at $y=0.5$ for short ($0\le t\le1$, 
$\tau = 1/128$) and long ($0\le t\le100$, \(\tau=0.1\)) times. The bounded, non-oscillatory evolution confirms the stability of 
the ETD2RK-DS scheme over a long time interval, despite the presence of only locally bounded nonlinear growth.

Further performance comparisons are summarized in Table~\ref{tab:Unbounded_CPU}, where we fix $\tau=1/32$ and vary the spatial resolution 
$h\in\{2^{-i}: i=4,\ldots,11\}$. This is to examine the performance of the different implementation strategies for increasing matrix sizes. 
In addition to the Sylvester-based and sliced LU implementations, we also report results for a sparsity-based LU approach that exploits the 
band structure of the split operators (in the dimension split ETD2RK-DS scheme) without tensor slicing or problem-size reduction (implemented 
using an LU sparse solver). It is observed that while sparsity exploitation improves performance relative to the unsplit formulation, 
the runtime still grows significantly faster than for the sliced implementations. In contrast, the spectral (Sylvester-equation) solver exhibits 
markedly better scalability as the matrix dimension increases, highlighting the advantage of explicit tensor slicing and structure reuse for large systems. 

\begin{table}[] 
	\centering
	\begin{tabular}{lllllll}
		& \multicolumn{3}{c}{Sylvester} & \multicolumn{3}{c}{LU}  \\ 
		\cmidrule(lr){2-4} \cmidrule(lr){5-7}
		$N$ & $E(N)$     & EOC    & CPU    & $E(N)$   & EOC  & CPU   \\ 
		\thickhline
		16  & 2.21E-02  &      & 0.41     & 2.21E-02 &      & 0.66  \\
		32  & 7.17E-03  & 1.63 & 0.77     & 7.17E-03 & 1.63 & 1.30  \\
		64  & 2.20E-03  & 1.70 & 1.49     & 2.20E-03 & 1.70 & 2.60  \\
		128 & 6.30E-04  & 1.80 & 2.96     & 6.30E-04 & 1.80 & 5.22  \\
		256 & 1.46E-04  & 2.11 & 5.93     & 1.46E-04 & 2.11 & 10.51
	\end{tabular} 
	\caption{Example \ref{exp:singular} temporal convergence and CPU-time comparison of the LU and
		Sylvester implementations of the Pad\'e--ETD2RK-DS scheme for the two-dimensional problem
		with locally bounded source.}\label{tab:Unbounded2D}
\end{table} 

\begin{table}[] 
	\centering
	\begin{tabular}{lllllll} 
		& \multicolumn{3}{c}{Sylvester} & \multicolumn{3}{c}{LU}  \\
		\cmidrule(lr){2-4} \cmidrule(lr){5-7}
		$N$ & $E(N)$     & EOC    & CPU    & $E(N)$   & EOC  & CPU   \\ 
		\thickhline
		10  & 1.50E-02    &        & 0.55   & 1.50E-02 &      & 0.77  \\
		20  & 4.78E-03    & 1.65   & 1.06   & 4.78E-03 & 1.65 & 1.54  \\
		40  & 1.39E-03    & 1.78   & 2.15   & 1.39E-03 & 1.78 & 3.07  \\
		80  & 3.67E-04    & 1.92   & 4.28   & 3.67E-04 & 1.92 & 6.10  \\
		160 & 7.84E-05    & 2.23   & 8.63   & 7.84E-05 & 2.23 & 12.36
	\end{tabular} 
	\caption{Example \ref{exp:singular} (3D case), temporal convergence and CPU-time comparison of the LU and
		Sylvester (spectral) implementations of the Pad\'e--ETD2RK-DS scheme for the three-dimensional problem
		with locally bounded source.}\label{tab:Unbounded3D}
\end{table}

\begin{table}[]
	\centering
	\begin{tabular}{lllllll}
		\multicolumn{4}{c}{2D}                           & \multicolumn{3}{c}{3D}        \\ 
		\cmidrule(lr){1-4} \cmidrule(lr){5-7} 
		M    & Spectral & LU (1D-Slice) & LU (Non-Slice) & M   & Spectral & LU(Parallel) \\ 
		\thickhline
		16   & 0.001    & 0.002         & 0.003          & 16  & 0.02     & 0.04         \\
		32   & 0.002    & 0.007         & 0.007          & 32  & 0.05     & 0.23         \\
		64   & 0.012    & 0.016         & 0.027          & 64  & 0.85     & 1.44         \\
		128  & 0.030    & 0.050         & 0.119          & 128 & 6.87     & 9.69         \\
		256  & 0.114    & 0.170         & 0.603          & 256 & 66.16    & 109.88       \\
		512  & 0.771    & 1.309         & 2.625          &     &          &              \\
		1024 & 3.314    & 6.481         & 10.602         &     &          &              \\
		2048 & 16.57    & 37.60         & 51.223         &     &          &   
	\end{tabular}
	\caption{Example \ref{exp:singular}. CPU-time comparison of the Sylvester (spectral) and LU-based
		implementations of the ETD2RK-DS scheme for increasing matrix dimensions, including a
		sparsity-based dimension-split implementation without tensor slicing.}
	\label{tab:Unbounded_CPU}
\end{table}

\begin{figure}
	\centering
	\includegraphics[width=0.49\textwidth]{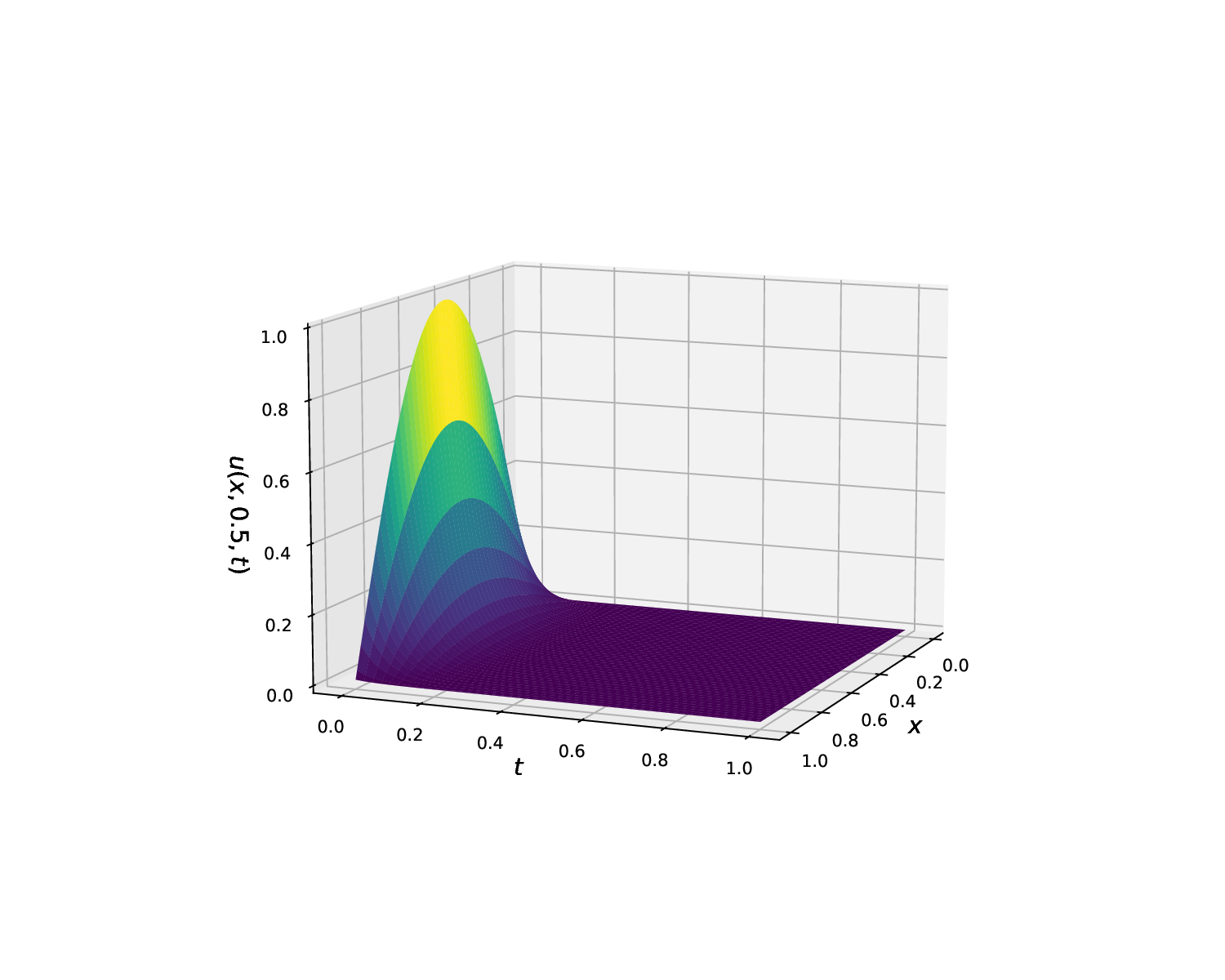} 
	\includegraphics[width=0.49\textwidth]{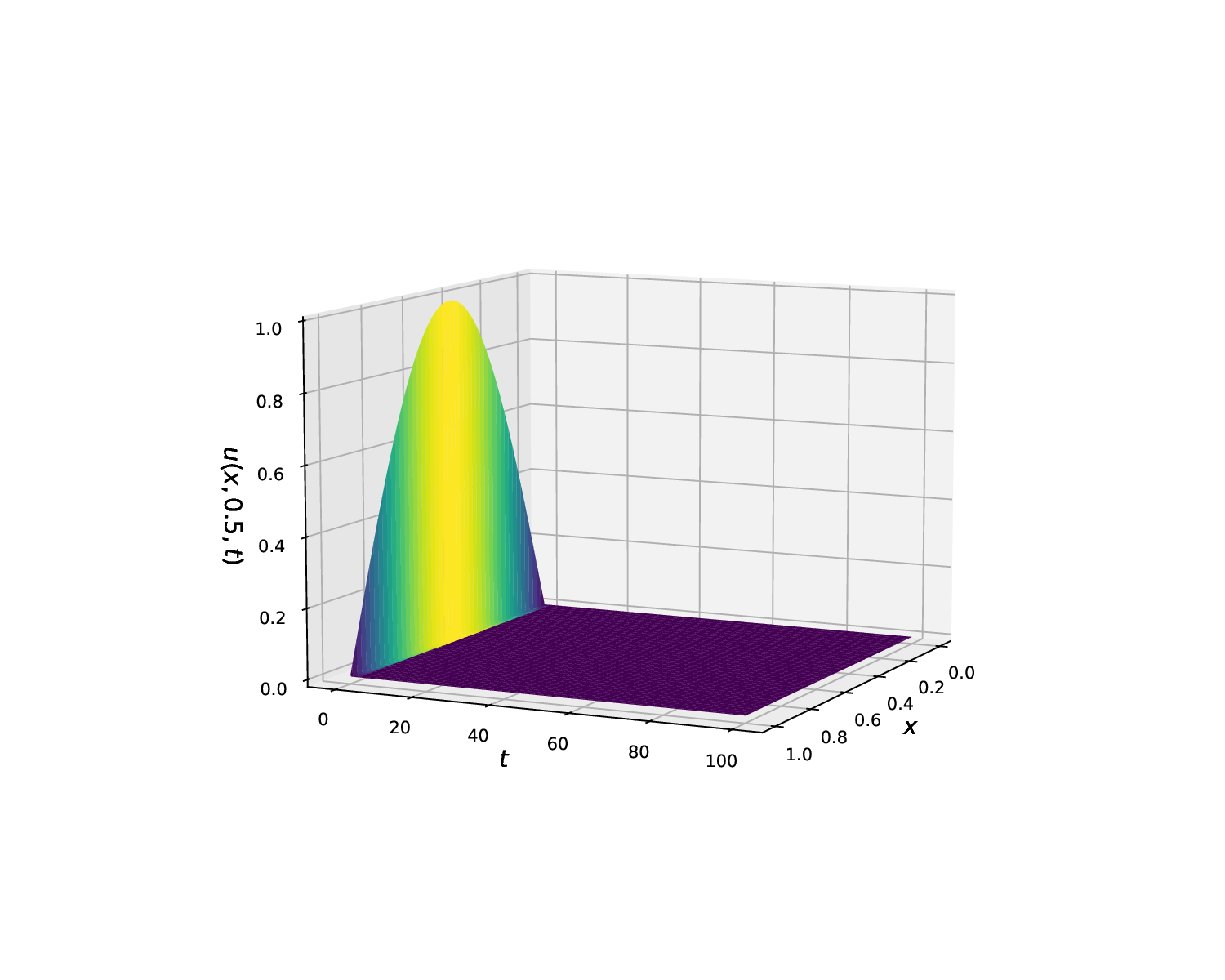}
	\caption{Example \ref{exp:singular}: numerical solution profiles for the two-dimensional problem at $y=0.5$.
		Short-time evolution ($0\le t\le1$, left) and long-time evolution ($0\le t\le100$, right)}
	\label{fig:singular_source_prof_2D}
\end{figure}

\begin{example}[FitzHugh-Nagumo Model (FHN)]\label{exp:FHN} 
	To illustrate the applicability of the ETD2RK-DS scheme to multi-component problems and to highlight some computational 
	benefits of the Sylvester-equation implementation developed in this work, we consider a coupled reaction--diffusion 
	system given by the FitzHugh-Nagumo Model:
	\[ 
	\begin{gathered}
		u_t - \kappa_u \Delta u =  u - \frac{u^3}{3} - v,  
		\nonumber 
		\\
		v_t - \kappa_v \Delta v =  \varepsilon (u -\alpha v), 
	\end{gathered}
	\] 
	posed on the domain $\Omega = (0,1) \times (0,1)$. The problem is completed with homogeneous Dirichlet boundary 
	conditions $u = v = 0$, on $\partial\Omega$ and Gaussian bump $u_0(x,y) = e^{-\frac{(x - 0.5)^2 + (y - 0.5)^2}{\sigma^2}}$ 
	as the initial data for $u$, while for $v$, we take $v_0(x,y) = 0$.  
\end{example} 

Applying the centered difference discretization in Section \ref{sec:discretize} results in the linear system of the 
form \eqref{eq:semidiscrete}, where the coefficient matrix \(A_h = A_1 + A_2\), with the block-diagonal matrices 
\(A_1\) and \(A_2\) given by: 
\[ 
A_1 = \begin{bmatrix}
	I_y \otimes \kappa_u A_x & 0 
	\\ 
	0 & I_y \otimes \kappa_v A_x
\end{bmatrix}, \qquad 
A_2 = \begin{bmatrix}
	\kappa_u A_y \otimes I_x  & 0 
	\\ 
	0 &  \kappa_v A_y \otimes I_x 
\end{bmatrix}, 
\] 
$A_x, A_y, I_x$, and $I_y$ are all as described in Section~\ref{subsec:spatial_Disc}. The dimension-splitting procedure in 
Section~\ref{subsec:2D Dimension Spliting} is then applicable to each of the two uncoupled $(m_x-1)\cdot(m_y-1)$ linear systems
arising in the Pad\'e-based ETD formulation.

An important advantage of the Sylvester-equation formulation, for solving such systems, is that the eigendecompositions of $A_x$ and 
$A_y$ need only be computed once and can be reused for both components, since they do not depend on the diffusivities $\kappa_u$ 
and $\kappa_v$. In contrast, the LU-based implementation must factorize the distinct matrices $\kappa_u A_x-Is$ and $\kappa_v A_x-I\,s$ 
separately whenever $\kappa_u\neq\kappa_v$, with similar factorization repeated for $\kappa_u A_y-I\,s$ and $\kappa_v A_y-I\,s$.  This 
example therefore illustrates the potential computational savings of the Sylvester-based approach for systems with heterogeneous or 
time-dependent diffusivities.

For testing purposes, we choose the parameters $\alpha = \varepsilon = 1$, $\kappa_u = 0.01$, and $\kappa_v = 10$, using uniform spatial 
grids $m_x = m_y = 512$. Further, the errors $E(N)$ are computed using a reference solution obtained on a fine temporal grid with $N=512$. 
The numerical errors and experimental convergence rates for the Sylvester-based implementation are reported in Table~\ref{tab:FHN}; similar
values were obtained with the LU-based implementation (not presented here). The CPU timings, included for comparison, show that the Sylvester method remains 
appreciably more efficient.  In this case, we observe a more pronounced difference in computation time between the LU and Sylvester-based 
implementations compared with the two-dimensional problems in Examples \ref{exp:Allen-Cahn} and \ref{exp:singular}. This likely reflects the 
increased number of matrix factorizations required by the LU approach when the diffusivities satisfy $\kappa_u \neq \kappa_v$.

Plots of the solution components at fixed time $t = 0.125$ are provided in Figure \ref{fig:FHZ_prof_2D}. The activator $u$ exhibits a localized pulse 
that diffuses and decays, while the inhibitor $v$ follows with a smoother response. The resulting Gaussian-like profiles are consistent with the 
diffusive pattern associated with the chosen parameters $(\kappa_u,\kappa_v,\alpha,\varepsilon)=(0.01,10,1,1)$ and the imposed homogeneous Dirichlet 
boundary conditions.

\begin{table}[] 
	\centering
	\begin{tabular}{lllll}
		& \multicolumn{2}{c}{Convergence Results} & \multicolumn{2}{c}{CPU Time} \\ 
		\cmidrule(lr){2-3} \cmidrule(lr){4-5} 
		$N$ & $E(N)$              & EOC             & Sylvester  & LU     \\ 
		\thickhline
		16  & 1.69E-02            &                 & 0.93       & 1.55            \\
		32  & 5.88E-03            & 1.52            & 1.84       & 3.08            \\
		64  & 1.79E-03            & 1.71            & 3.63       & 6.15            \\
		128 & 4.80E-04            & 1.90            & 7.25       & 12.47           \\
		256 & 1.03E-04            & 2.22            & 14.11      & 25.07          
	\end{tabular} 
	\caption{Example \ref{exp:FHN}: Numerical results for the Sylvester based implementation, with $m_x = m_y = 512$ and \(T = 1\). 
		As well as CPU time comparisons of the LU and Sylvester equation based implementations of the ETD2RK-DS scheme for solving 
		the FHN model}\label{tab:FHN}
\end{table} 

\begin{figure}
	\centering
	\includegraphics[width=0.49\textwidth]{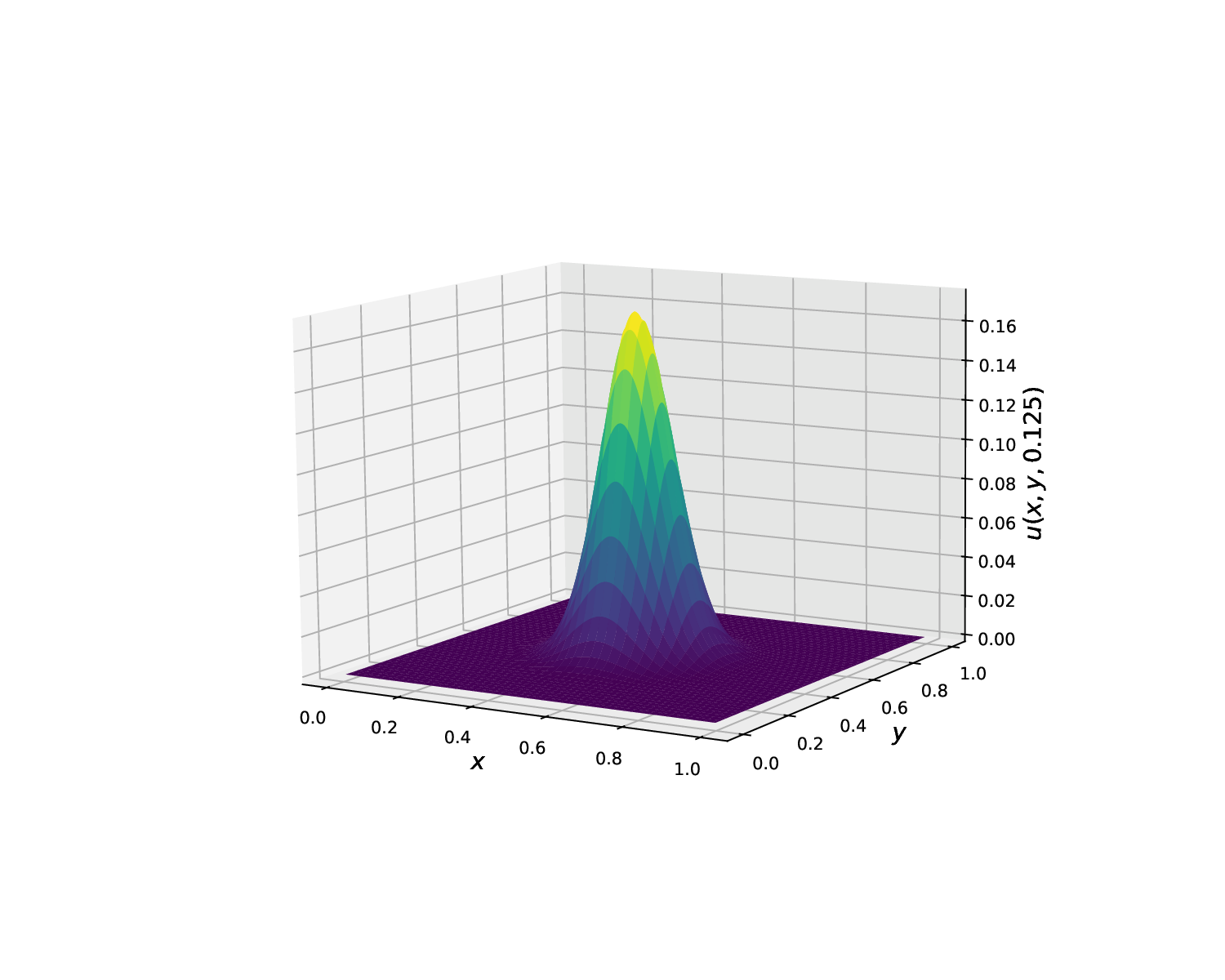} 
	\includegraphics[width=0.49\textwidth]{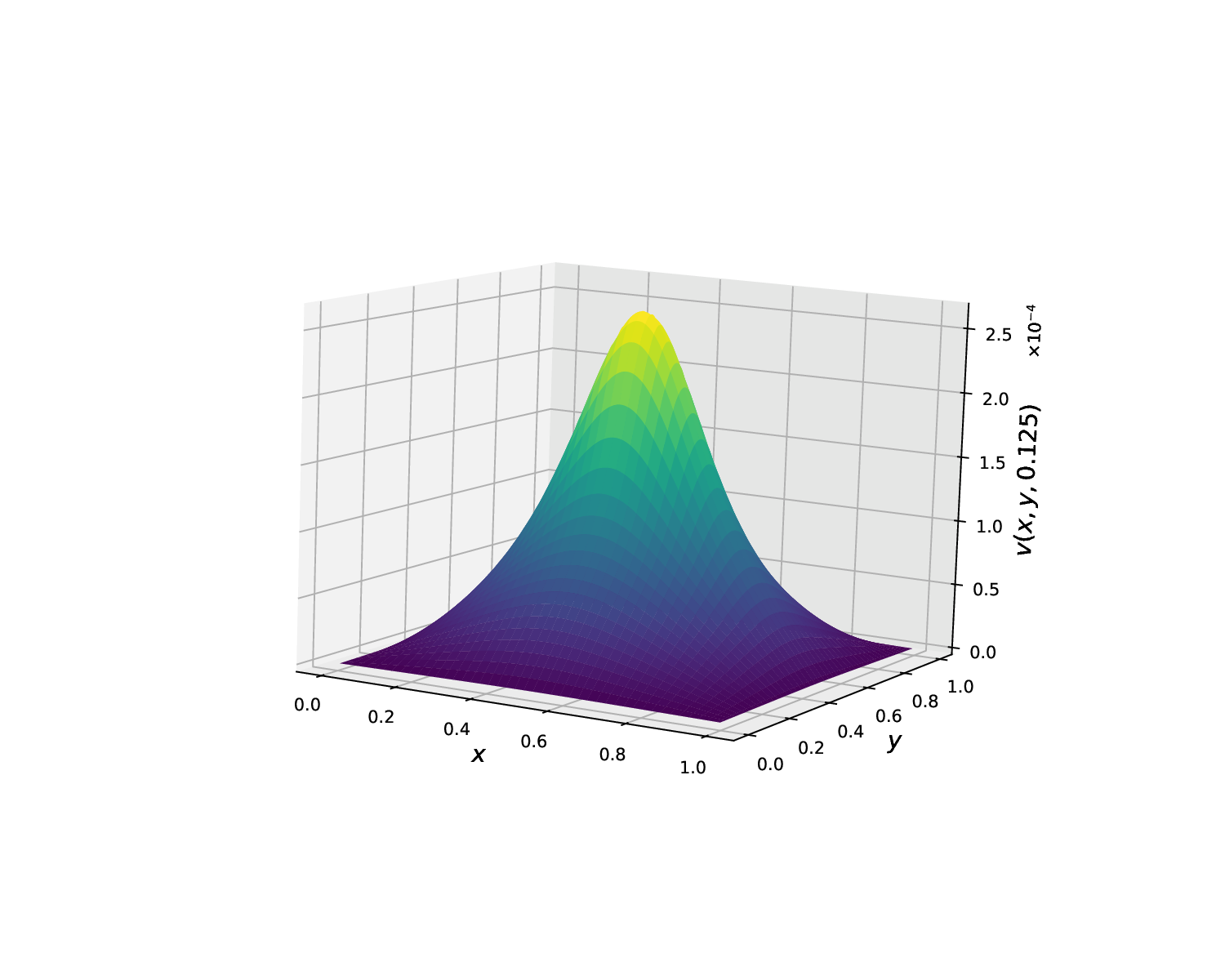}
	\caption{Example \ref{exp:FHN} 3D plot of the solution components $u$ (left) and $v$ (right) at $t = 0.125$, 
		taking $N = 256$}
	\label{fig:FHZ_prof_2D}
\end{figure}  

\section*{Conclusion} 
We have developed and analyzed a second-order, dimension-split exponential time differencing Runge--Kutta scheme 
(ETD2RK--DS) for multidimensional reaction--diffusion equations. For the underlying scheme based on exact matrix 
exponentials, we established uniform stability and second-order temporal convergence under mild assumptions on the 
nonlinear source term, and obtained an efficient fully discrete formulation using Pad\'e approximations, for which a 
complete error analysis was provided. A central contribution of this work is the derivation of explicit, reproducible 
matrix-slicing and rearrangement procedures that realize a genuine algebraic decomposition of the multidimensional 
problem into collections of independent one-dimensional subsystems, reducing the dominant per-time-step computational 
cost from $\mathcal{O}(m^3)$ to $\mathcal{O}(m^2)$ in two dimensions and from $\mathcal{O}(m^5)$ to 
$\mathcal{O}(m^3)$ in three dimensions compared with banded LU solvers applied to the unsplit problem, where $m$ 
denotes the number of grid points per spatial direction. To efficiently solve the resulting shifted one-dimensional 
systems, we introduced a Sylvester-equation reformulation that enables a spectral implementation based on reusable 
eigendecompositions, confining complex-valued calculations associated with Pad\'e approximants to a one-time 
preprocessing step and reducing subsequent linear solves to real-valued matrix--vector multiplications and Hadamard 
divisions. This reformulation yields substantial computational savings relative to LU-based solvers while preserving 
accuracy, with particular advantages for higher-order Pad\'e approximants and problems with heterogeneous diffusivities. 
Numerical experiments in two and three dimensions confirm the second-order temporal convergence of the scheme, 
demonstrate its robustness beyond the global linear-growth setting, and validate the computational advantages of the 
proposed Sylvester-based implementation.

\section*{Data Availability}  
No data was analysed in this study. 

\section*{Acknowledgement} 
The authors would like to acknowledge the support provided by the Deanship of Research 
at King Fahd of Petroleum \& Minerals (KFUPM), Saudi Arabia for funding this work through project No. EC231011. 

\section*{Declarations} 
The authors declare that they have no competing interests. 

%%%%%%%%%%%%%%%%%%%%%%%%%%%%%%%%%%%%%%%%%%%%%%%%%%%%%%%%%%%%%%%%%
\bibliographystyle{elsarticle-num}
\bibliography{mlf,bio,LinearAlgebra,Integer_Order,Porous_Media}  
%%%%%%%%%%%%%%%%%%%%%%%%%%%%%%%%%%%%%%%%%%%%%%%%%%%%%%%%%%%%%%%%%%  
\end{document}